 \newtheoremstyle{mytheorem}
 {3pt}
 {3pt}
 {\slshape}
 {}
 {\bfseries}
 {.}
 { }
 {}
\numberwithin{equation}{section}
\theoremstyle{theorem}
\newtheorem{theorem}{Theorem}[section]
\newtheorem*{theorem*}{Theorem}
\newtheorem{corollary}[theorem]{Corollary}
\newtheorem{proposition}[theorem]{Proposition}
\theoremstyle{definition}
\newtheorem{definition}{Definition}[section]
\newtheorem{example}{Example}[section]
\newtheorem*{example*}{Example}
\newtheorem{conjecture}{Conjecture}[section]
\theoremstyle{remark}
\newtheorem{remark}{Remark}[section]
\newtheorem*{remark*}{Remark}
\newtheorem*{remarks*}{Remarks}
\newtheorem{problem}{Problem}[section]
\newcommand{\Keywords}[1]{\ifthenelse{\isempty{#1}}{}{\smallskip \smallskip \noindent \textbf{Keywords}. #1}}
\newcommand{\MSC}[2][2010]{\ifthenelse{\isempty{#2}}{}{\smallskip \smallskip \noindent \textbf{#1MSC}. #2}}
\newcommand{\abstractnote}[1]{\ifthenelse{\isempty{#1}}{}{\smallskip \smallskip \noindent \textsuperscript{\dag}#1}}
\def\specialsection{\@startsection{section}{1}%
  \z@{\linespacing\@plus\linespacing}{.5\linespacing}%
  {\normalfont}}
\def\section{\@startsection{section}{1}%
  \z@{.7\linespacing\@plus\linespacing}{.5\linespacing}%
  {\normalfont\scshape}}
\patchcmd{\@settitle}{\uppercasenonmath\@title}{\Large\boldmath}{}{}
\patchcmd{\@settitle}{\begin{center}}{\begin{flushleft}}{}{}
\patchcmd{\@settitle}{\end{center}}{\end{flushleft}}{}{}
\patchcmd{\@setauthors}{\MakeUppercase}{\normalsize}{}{}
\patchcmd{\@setauthors}{\centering}{\raggedright}{}{}
\patchcmd{\section}{\scshape}{\large\bfseries\boldmath}{}{}
\patchcmd{\subsection}{\bfseries}{\bfseries\boldmath}{}{}
\renewcommand{\@secnumfont}{\bfseries}
\patchcmd{\@startsection}{\@afterindenttrue}{\@afterindentfalse}{}{}
\patchcmd{\abstract}{\leftmargin3pc}{\leftmargin1pc}{}{}
\def\maketitle{\par
  \@topnum\z@ 
  \@setcopyright
  \thispagestyle{empty}
  \ifx\@empty\shortauthors \let\shortauthors\shorttitle
  \else \andify\shortauthors
  \fi
  \@maketitle@hook
  \begingroup
  \@maketitle
  \toks@\@xp{\shortauthors}\@temptokena\@xp{\shorttitle}%
  \toks4{\def\\{ \ignorespaces}}
  \edef\@tempa{%
    \@nx\markboth{\the\toks4
      \@nx\MakeUppercase{\the\toks@}}{\the\@temptokena}}%
  \@tempa
  \endgroup
  \c@footnote\z@
  \@cleartopmattertags
}
\newcommand{\mA}{\mathscr{A}}
\newcommand{\fF}{\mathfrak{F}}
\newcommand{\mG}{\mathscr{G}}
\newcommand{\mI}{\mathscr{I}}
\newcommand{\cL}{\mathcal{L}}
\newcommand{\mM}{\mathscr{M}}
\newcommand{\mP}{\mathscr{P}}
\newcommand{\mU}{\mathscr{U}}
\newcommand{\mV}{\mathscr{V}}
\newcommand{\mW}{\mathscr{W}}
\newcommand{\ubA}{\underline{\mathbf{A}}}
\newcommand{\ubF}{\underline{\mathbf{F}}}
\newcommand{\Mat}{\operatorname{Mat}}
\newcommand{\uba}{\underline{\boldsymbol{\alpha}}}
\newcommand{\ubb}{\underline{\boldsymbol{\beta}}}
\newcommand{\ubg}{\underline{\boldsymbol{\gamma}}}
\newcommand{\diag}{\operatorname{diag}}
\title{Linked partition ideals, directed graphs and $q$-multi-summations}
\author[S. Chern]{Shane Chern}
\address{Department of Mathematics, Penn State University, University Park, PA 16802, USA}
\email{shanechern@psu.edu}
\date{}
\begin{document}

%

\maketitle

\begin{abstract}

Finding an Andrews--Gordon type generating function identity for a linked partition ideal is difficult in most cases. In this paper, we will handle this problem in the setting of graph theory. With the generating function of directed graphs with an ``empty'' vertex, we then turn our attention to a $q$-difference system. This $q$-difference system eventually yields a factorization problem of a special type of column functional vectors involving $q$-multi-summations. Finally, using a recurrence relation satisfied by certain $q$-multi-summations, we are able to provide non-computer-assisted proofs of some Andrews--Gordon type generating function identities. These proofs also have an interesting connection with binary trees.

\Keywords{Linked partition ideal, directed graph, $q$-multi-summation, $q$-difference system, generating function, Andrews--Gordon type series.}

\MSC{Primary 11P84; Secondary 05A17, 05C05, 05C20, 33D70.}
\end{abstract}

\section{Introduction}

\subsection{Rogers--Ramanujan type identities}

The two Rogers--Ramanujan identities \cite{Ram1919,Rog1894}, which state as follows, have attracted a great deal of research interest in the theory of partitions.

\begin{theorem*}[Rogers--Ramanujan identities]
	\textup{(i).~}The number of partitions of a non-negative integer $n$ into parts congruent to $\pm 1$ modulo $5$ is the same as the number of partitions of $n$ such that each two consecutive parts have difference at least $2$.
	
	\textup{(ii).~}The number of partitions of a non-negative integer $n$ into parts congruent to $\pm 2$ modulo $5$ is the same as the number of partitions of $n$ such that each two consecutive parts have difference at least $2$ and such that the smallest part is at least $2$.
\end{theorem*}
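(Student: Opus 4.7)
The plan is to reduce each statement to an analytic identity between a series and an infinite product, and then to prove the identity by $q$-series manipulation. Writing $p_i(n)$ for the number of partitions of $n$ counted by the congruence side and $p_i'(n)$ for those counted by the gap-$\geq 2$ side in part (i) or (ii), the product side is immediate from the standard geometric-series expansion for partitions into restricted parts:
\begin{equation*}
\sum_{n \geq 0} p_1(n) q^n = \prod_{k \geq 0} \frac{1}{(1-q^{5k+1})(1-q^{5k+4})},
\end{equation*}
and analogously for (ii) with residues $\pm 2 \pmod 5$.

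For the sum side I would use the staircase decomposition: any partition into $k$ parts satisfying the difference-$\geq 2$ condition of (i) can be written uniquely as the sum of the minimal such partition $1+3+\cdots+(2k-1)$ and a partition $\mu$ with at most $k$ parts. The minimal piece contributes $q^{k^2}$ and the partition $\mu$ contributes $1/(q;q)_k$, so
\begin{equation*}
\sum_{n \geq 0} p_1'(n) q^n = \sum_{k \geq 0} \frac{q^{k^2}}{(q;q)_k}, \qquad \sum_{n \geq 0} p_2'(n) q^n = \sum_{k \geq 0} \frac{q^{k^2+k}}{(q;q)_k},
\end{equation*}
the second coming from the fact that the minimal partition in part (ii) is $2+4+\cdots+2k=k^2+k$.

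The main obstacle is then the analytic identity between these sums and products. For (i) I would introduce
\begin{equation*}
f(x) = \sum_{n \geq 0} \frac{x^n q^{n^2}}{(q;q)_n},
\end{equation*}
derive the $q$-difference relation $f(x) = f(xq) + xq\, f(xq^2)$ by computing $f(x) - f(xq)$ and shifting the summation index, and then iterate the recurrence to express $f(1)$ as a theta-type alternating series; a final appeal to the Jacobi triple product identity would rewrite that series as the desired product. Part (ii) is handled by the parallel function $g(x) = \sum_{n \geq 0} x^n q^{n^2+n}/(q;q)_n$. In the spirit of the present paper, I would alternatively model the gap condition as a linked partition ideal whose associated directed graph has a small cluster structure, solve the resulting $q$-difference system for the column functional vector of generating functions, and match the $q$-multi-summation that emerges directly against the product side; this route would replace the Jacobi triple product step by the systematic machinery announced in the abstract.
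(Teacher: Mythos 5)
The paper does not actually prove this theorem: it is stated as a classical result and attributed to Rogers and Ramanujan \cite{Rog1894,Ram1919}; the machinery developed in the paper only re-derives the \emph{series} side, namely that the gap-condition partitions have generating function $\sum_{n\ge 0}q^{n^2}/(q;q)_n$ (and $\sum_{n\ge 0}q^{n^2+n}/(q;q)_n$ for part (ii)), via the linked-partition-ideal/$q$-difference-system apparatus of \S 3.2. Your combinatorial reductions are correct and standard: the product side follows from geometric series, and the staircase decomposition correctly yields the two Rogers--Ramanujan sums. Up to that point you have reduced the theorem to the analytic identities
\begin{equation*}
\sum_{n\ge 0}\frac{q^{n^2}}{(q;q)_n}=\prod_{k\ge 0}\frac{1}{(1-q^{5k+1})(1-q^{5k+4})},\qquad
\sum_{n\ge 0}\frac{q^{n^2+n}}{(q;q)_n}=\prod_{k\ge 0}\frac{1}{(1-q^{5k+2})(1-q^{5k+3})}.
\end{equation*}

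The gap is in how you propose to prove these. The functional equation $f(x)=f(xq)+xq\,f(xq^2)$ is correct (it is exactly the first row of the paper's system \eqref{eq:q-diff-sys-ex1}), but ``iterate the recurrence to express $f(1)$ as a theta-type alternating series'' is not a mechanical step: naive iteration of this recurrence reproduces the sum $\sum q^{n^2}x^n/(q;q)_n$ rather than the alternating series $\sum_n(-1)^nx^{2n}q^{n(5n-1)/2}(1-x^2q^{2(2n+1)})\cdots$ needed for the triple-product step. Producing that theta form is precisely the hard content of Rogers' proof and requires an additional idea (Rogers' auxiliary functions, Schur's polynomial recurrences with a sign-reversing involution, Watson's $_8\phi_7$ transformation, or a Bailey pair); none of these is supplied. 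Your closing suggestion that the paper's directed-graph machinery could ``replace the Jacobi triple product step'' is also not right: that machinery, by design, only establishes Andrews--Gordon type \emph{series} representations for difference-condition ideals and says nothing about modulus-$5$ product evaluations. As written, the proposal proves the two generating-function identities but not the theorem.
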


There are many identities of the same flavor, including the Andrews--Gordon identity \cite{And1966,Gor1961}, the G\"{o}llnitz--Gordon identities \cite{Gol1967,Gor1966}, the Capparelli identities \cite{Cap1993} and so forth. In 2014, Kanade and Russell \cite{KR2015} further proposed six challenging conjectures on Rogers--Ramanujan type identities, the latter two of which were proved in 2018 by Bringmann, Jennings-Shaffer and Mahlburg \cite{BJM2018}.

Among these Rogers--Ramanujan type identities, two types of partition sets are considered. One partition set is consist of partitions under certain \textit{congruence} condition. For example, in the first Rogers--Ramanujan identity, we enumerate partitions into parts congruent to $\pm 1$ modulo $5$. The other partition set contains partitions under certain  \textit{difference-at-a-distance} theme. Let us first adopt a definition in \cite{KR2015}.

\begin{definition}
	We say that a partition $\lambda=\lambda_1+ \lambda_2+\cdots+ \lambda_{\ell}$ satisfies the \textit{difference at least $d$ at distance $k$} condition if, for all $j$, $\lambda_j - \lambda_{j+k}\ge  d$.
\end{definition}

\noindent In this setting, we may paraphrase the corresponding partition set in the first Rogers--Ramanujan identity as the set of partitions with difference at least $2$ at distance $1$.

\medskip

Although it is straightforward to find the generating function for partitions under given congruence condition, it is always difficult to obtain an analytic form of generating function for partitions under a difference-at-a-distance theme --- this is why the six conjectures of Kanade and Russell remained mysterious for years. But this problem was recently settled by Kanade and Russell themselves \cite{KR2018} and independently by Kur\c{s}ung\"{o}z \cite{Kur2018,Kur2019} using combinatorial approaches, and later by Li and the author \cite{CL2018} using algebraic methods. For example, in the Kanade--Russell conjecture $I_1$, we would like to count
\begin{quote}
	``partitions with difference at least $3$ at distance $2$ such that if two consecutive parts differ by at most $1$, then their sum is divisible by $3$.''
\end{quote}
It was shown that its generating function is a double summation as follows:
\begin{equation}\label{eq:I_1}
	\sum_{\lambda} x^{\sharp(\lambda)}q^{|\lambda|}=\sum_{n_1,n_2\ge 0}\frac{q^{n_1^2+3n_2^2+3n_1n_2}x^{n_1+2n_2}}{(q;q)_{n_1} (q^3;q^3)_{n_2}},
\end{equation}
where $\lambda$ runs through all such partitions, $\sharp(\lambda)$ denotes the number of parts in $\lambda$ and $|\lambda|$ is the size of $\lambda$ (that is, the sum of all parts in $\lambda$).

\subsection{Span one linked partition ideals}

In the 1970s, George Andrews \cite{And1972,And1974,And1975} have already started a systematic study of Rogers--Ramanujan type identities and developed a general theory in which the concept of \textit{linked partition ideals} was introduced. However, in this paper, we will not go into details of this concept due to its lengthy definition. The interested readers may refer to Chapter 8 of Andrews' book: \textit{The theory of partitions} \cite{And1976}.

What we are interested in this paper is a special case of linked partition ideals --- the \textit{span one linked partition ideals}. In fact, this special case is enough to cover most partition sets under difference-at-a-distance themes.

\medskip

Let us first fix some notations.

Let $\mathscr{P}$ be the set of all partitions. We define a map $\phi:\mP\to\mP$ by sending a partition $\lambda$ to another partition which is obtained by adding $1$ to each part of $\lambda$. For example, $\phi(5+3+3+2+1)=6+4+4+3+2$. For $k\ge 2$, we iteratively write $\phi^k(\lambda)=\phi(\phi^{k-1}(\lambda))$. Also, for two partitions $\lambda$ and $\pi$, their sum $\lambda\oplus\pi$ is constructed by counting the total appearances of each different part in $\lambda$ and $\pi$. For example, if $\lambda=3+2+1+1$ and $\pi=4+2+2+1+1$, then $\lambda\oplus\pi=4+3+2+2+2+1+1+1+1$.

Let $\Pi$ be a finite set of partitions containing the empty partition $\emptyset$. For each partition $\pi\in\Pi$, we define its \textit{linking set} $\mathcal{L}(\pi)$ by a subset of $\Pi$ containing the empty partition. Also, we require that the linking set of the empty partition, $\cL(\emptyset)$, equals $\Pi$. It is possible to construct finite chains
\begin{align}
\lambda_0\to\lambda_1\to\lambda_2\to\cdots\to\lambda_K
\end{align}
such that $\lambda_0\in\Pi$, $\lambda_K\ne \emptyset$ and for all $1\le k\le K$, $\lambda_k\in\cL(\lambda_{k-1})$. We may further extend such a finite chain to an infinite chain ending with a series of empty partitions
\begin{align}
\mathcal{C}: \lambda_0\to\lambda_1\to\lambda_2\to\cdots\to\lambda_K\to\emptyset\to\emptyset\to\cdots.
\end{align}
Let $S$ be a positive integer no smaller than the largest part among all partitions in $\Pi$. The above infinite chain $\mathcal{C}$ uniquely determines a partition by
\begin{align}\label{eq:decomp}
\lambda_0\oplus\phi^S(\lambda_1)\oplus\phi^{2S}(\lambda_2)\oplus\cdots\oplus \phi^{KS}(\lambda_K)\oplus \phi^{(K+1)S}(\emptyset)\oplus \phi^{(K+2)S}(\emptyset)\oplus\cdots,
\end{align}
which is equivalent to
\begin{align}
\lambda_0\oplus\phi^S(\lambda_1)\oplus\phi^{2S}(\lambda_2)\oplus\cdots\oplus \phi^{KS}(\lambda_K).
\end{align}
Let us collect such partitions along with the empty partition $\lambda=\emptyset$ (which corresponds to the infinite chain $\emptyset\to\emptyset\to\cdots$) and obtain a partition set $\mathscr{I}:=\mI(\langle\Pi,\cL\rangle,S)$. Then $\mI$ is called a \textit{span one linked partition ideal}.

\begin{example}\label{ex:1.1}
	In the first Rogers--Ramanujan identity, we consider partitions with difference at least $2$ at distance $1$. It is not hard to verify that this partition set is a span one linked partition ideal $\mI(\langle\Pi,\cL\rangle,S)$ where $\Pi=\{\emptyset,1,2\}$,\footnote{Here $1$ denotes a partition containing one part of size $1$ and likewise $2$ denotes a partition containing one part of size $2$.} the linking sets are
	$$\cL(\emptyset)=\{\emptyset,1,2\},\quad \cL(1)=\{\emptyset,1,2\},\quad \cL(2)=\{\emptyset,2\},$$
	and $S=2$.
\end{example}

\subsection{Generating function of span one linked partition ideals}

Given a span one linked partition ideal $\mI=\mI(\langle\Pi,\cL\rangle,S)$, one crucial problem is to determine its generating function
$$\mG(x)=\mG(x,q):=\sum_{\lambda\in\mI}x^{\sharp(\lambda)}q^{|\lambda|}.$$

\medskip

Assume that $\Pi=\{\pi_1,\pi_2,\ldots,\pi_K\}$ where $\pi_1=\emptyset$, the empty partition. We define a $(0,1)$-matrix $\mA=\mA(\langle\Pi,\cL\rangle)$ by
\begin{equation}\label{eq:adj-p}
\mA_{i,j}=\begin{cases}
1 & \text{if $\pi_j\in\cL(\pi_i)$},\\
0 & \text{if $\pi_j\not\in\cL(\pi_i)$},
\end{cases}
\end{equation}
and a diagonal matrix $\mW(x)=\mW(\langle\Pi,\cL\rangle\,|\,x,q)$ by
\begin{equation}\label{eq:w-mat-ptn}
\mW(x)=\begin{pmatrix}
x^{\sharp(\pi_1)}q^{|\pi_1|}\\
& x^{\sharp(\pi_2)}q^{|\pi_2|}\\
& & \ddots\\
& & & x^{\sharp(\pi_K)}q^{|\pi_K|}
\end{pmatrix}.
\end{equation}

Let the \textit{$S$-tail} of a partition $\lambda$ be the collection of parts $\le S$ in $\lambda$.

\begin{theorem}\label{th:main}
	For each $1\le k\le K$, we denote by $\mI_k$ the subset of partitions $\lambda$ in $\mI(\langle\Pi,\cL\rangle,S)$ whose $S$-tail is $\pi_k\in\Pi$. We further write
	$$\mG_k(x)=\mG_k(x,q):=\sum_{\lambda\in\mI_k}x^{\sharp(\lambda)}q^{|\lambda|}.$$
	Let $\mA$ and $\mW(x)$ be defined as in \eqref{eq:adj-p} and \eqref{eq:w-mat-ptn}, respectively. Then, for $|q|<1$ and $|x|<|q|^{-1}$,
	\begin{equation}\label{eq:main}
	\begin{pmatrix}
	\mG_1(x)\\
	\mG_2(x)\\
	\vdots\\
	\mG_K(x)
	\end{pmatrix}
	=\mW(x).\left(\lim_{M\to\infty}\prod_{m=1}^M (\mA.\mW(x q^{mS}))\right).\begin{pmatrix}
	1\\
	0\\
	\vdots\\
	0
	\end{pmatrix}.
	\end{equation}
\end{theorem}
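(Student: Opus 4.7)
The plan is to derive a first-order $q$-difference system for the column vector $\mathbf{G}(x) := (\mG_1(x), \ldots, \mG_K(x))^T$, iterate it, and pass to a limit. Fix $1 \le k \le K$ and let $\lambda \in \mI_k$. In the canonical decomposition \eqref{eq:decomp}, the first block equals $\lambda_0 = \pi_k$ and the residual partition $\mu := \lambda_1 \oplus \phi^S(\lambda_2) \oplus \cdots$ again lies in $\mI$, with its own $S$-tail $\lambda_1 \in \cL(\pi_k)$. Writing $\lambda = \pi_k \oplus \phi^S(\mu)$ and noting that $\phi^S$ preserves the number of parts while increasing the size by $S\sharp(\mu)$, the weight of $\phi^S(\mu)$ equals $(xq^S)^{\sharp(\mu)} q^{|\mu|}$. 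Summing over all admissible $\mu \in \mI_j$ yields
\begin{equation*}
\mG_k(x) = x^{\sharp(\pi_k)} q^{|\pi_k|} \sum_{j:\, \pi_j \in \cL(\pi_k)} \mG_j(xq^S),
\end{equation*}
which is the matrix recurrence $\mathbf{G}(x) = \mW(x)\,\mA\,\mathbf{G}(xq^S)$.

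Iterating this recurrence $M+1$ times produces
\begin{equation*}
\mathbf{G}(x) = \mW(x) \cdot \prod_{m=1}^{M}\bigl(\mA\, \mW(xq^{mS})\bigr) \cdot \mA\,\mathbf{G}(xq^{(M+1)S}).
\end{equation*}
The boundary condition $\mathbf{G}(0) = (1,0,\ldots,0)^T$ holds because $\lambda = \emptyset$ is the only partition of size $0$, and its $S$-tail is $\pi_1 = \emptyset$. Letting $M \to \infty$ so that $\mathbf{G}(xq^{(M+1)S}) \to (1,0,\ldots,0)^T$, I obtain
\begin{equation*}
\mathbf{G}(x) = \mW(x) \cdot \prod_{m=1}^{\infty}\bigl(\mA\, \mW(xq^{mS})\bigr) \cdot \mA\,(1,0,\ldots,0)^T.
\end{equation*}
To reconcile this with \eqref{eq:main}, note that $\pi_1 = \emptyset$ forces the $(1,1)$-entry of $\mW(y)$ to equal $1$ for every $y$, hence $\mW(y)(1,0,\ldots,0)^T = (1,0,\ldots,0)^T$. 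Therefore $\mA\,(1,0,\ldots,0)^T = \mA\,\mW(xq^{NS})\,(1,0,\ldots,0)^T$ for any $N$, which lets me absorb the trailing $\mA$ into an additional matrix factor $\mA\,\mW(xq^{NS})$ at the end of the product, reproducing exactly the right-hand side of \eqref{eq:main}.

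The main technical obstacle is justifying the limit $M\to\infty$. I would treat each $\mG_k(x,q)$ as a formal power series in $q$ with coefficients in $\mathbb{Z}[x]$: the coefficient of $x^a q^b$ enumerates the finitely many $\lambda \in \mI_k$ with $\sharp(\lambda)=a$ and $|\lambda|=b$. In the iterated identity, every component of the deviation $\mathbf{G}(xq^{(M+1)S}) - (1,0,\ldots,0)^T$ consists only of monomials of the form $(xq^{(M+1)S})^{a'}q^{b'}$ with $a' \ge 1$, whose $q$-degree is at least $(M+1)S$; consequently each fixed coefficient of $x^a q^b$ in the iterated formula stabilizes as soon as $(M+1)S > b$. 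This yields convergence in the formal power series topology, and the analytic hypotheses $|q|<1$ and $|x|<|q|^{-1}$ then upgrade this to absolute convergence of the infinite product as an analytic function, completing the argument.
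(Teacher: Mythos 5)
Your proof is correct, and it takes a genuinely different route from the paper. The paper first builds general machinery for directed graphs: it proves by induction on the number of steps that the finite product $\mW(x).\mA.\mW(xq^S).\cdots.\mA.\mW(xq^{MS})$ records generating functions of walks of step $M$ (Theorem \ref{th:graph-1}), extends this to infinite walks terminating in the empty vertex (Theorem \ref{th:graph-inf}), and then obtains Theorem \ref{th:main} by bijecting partitions in $\mI$ with such infinite walks in the associated directed graph. You instead peel off the $S$-tail directly: the unique decomposition \eqref{eq:decomp} gives a weight-preserving bijection between $\mI_k$ and pairs $(\pi_k,\mu)$ with $\mu\in\mI_j$, $\pi_j\in\cL(\pi_k)$, which yields the first-order system $\mathbf{G}(x)=\mW(x).\mA.\mathbf{G}(xq^S)$; iterating and letting $M\to\infty$ (with the trailing $\mA$ absorbed via $\mW(y).(1,0,\ldots,0)^T=(1,0,\ldots,0)^T$) gives \eqref{eq:main}. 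Your route is shorter and has the pleasant feature of producing the $q$-difference system \eqref{eq:q-diff-sys} directly (the paper derives it only afterwards, in \S\ref{sec:q-diff-sys}, as a consequence of Theorem \ref{th:main}); your coefficientwise stabilization argument for the limit is also more careful than what the paper writes down. What the paper's longer route buys is the graph-theoretic framework itself, which is a stated goal of \S\ref{sec:d-graph} and yields the walk-counting corollary as a byproduct. The only points worth making explicit in your write-up are (i) that the decomposition \eqref{eq:decomp} of a given $\lambda\in\mI$ is unique because the blocks $\phi^{mS}(\lambda_m)$ occupy disjoint ranges of part sizes (this is what makes your correspondence a bijection rather than merely a surjection), and (ii) that the truncated chain $\lambda_1\to\lambda_2\to\cdots$ is again admissible, so $\mu$ really does lie in $\mI$; both are immediate but are the load-bearing facts behind your recurrence.
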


\begin{remark}
	Recall that $\pi_1=\emptyset$ (so that $\pi_1\in\cL(\pi)$ for all $\pi\in \Pi$) and $\cL(\emptyset)=\Pi$. It follows that all entries in the first row and column of $\mA$ are $1$. Further, the first entry in $\mW(x)$ is also $x^0q^0=1$. When $|q|<1$ and $|x|<|q|^{-1}$, we have
	$$\lim_{M\to\infty}\mA.\mW(x q^{MS})=\begin{pmatrix}
	1 & 0 & 0 & \cdots & 0\\
	1 & 0 & 0 & \cdots & 0\\
	\vdots & \vdots & \vdots & \ddots & \vdots\\
	1 & 0 & 0 & \cdots & 0
	\end{pmatrix}.$$
	Throughout, $\prod_{m=1}^M (\mA.\mW(x q^{mS}))$ means
	\begin{equation}
	\mA.\mW(xq^S).\mA.\mW(xq^{2S}).\cdots. \mA.\mW(xq^{MS}).
	\end{equation}
\end{remark}

\begin{remark}
	We have
	$$\mG(x)=\sum_{k=1}^K \mG_k(x),$$
	but since $\cL(\emptyset)=\Pi$, it is not hard to see that
	$$\mG_1(x)=\sum_{k=1}^K \mG_k(xq^S).$$
	Hence,
	\begin{equation}
	\mG(x)=\mG_1(xq^{-S}).
	\end{equation}
\end{remark}

\medskip

In September 2018, George Andrews communicated to Zhitai Li and the author a conjecture on the generating function for linked partition ideals, which was recorded in \cite{CL2018}.

\begin{conjecture}[Andrews]
	Every linked partition ideal has a two-variable generating function of the form
	\begin{equation}\label{eq:And-conj-0}
	\left(\begin{array}{c}\text{product}\\\text{of}\\\text{$q$-factorials}\end{array}\right)\times \sum_{n_1,\ldots,n_r\ge 0}\frac{(-1)^{L_1(n_1,\ldots,n_r)}q^{Q(n_1,\ldots,n_r)+L_2(n_1,\ldots,n_r)}x^{L_3(n_1,\ldots,n_r)}}{(q^{A_1};q^{A_1})_{n_1}\cdots (q^{A_r};q^{A_r})_{n_r}},
	\end{equation}
	in which $L_1$, $L_2$ and $L_3$ are linear forms in $n_1,\ldots,n_r$ and $Q$ is a quadratic form in $n_1,\ldots,n_r$. Here the coefficient of the $x^mq^n$ term is the number of partitions of $n$ in this linked partition ideal with $m$ parts.
\end{conjecture}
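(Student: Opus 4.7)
The plan is to build on Theorem \ref{th:main}. A general linked partition ideal of span $s$ can be recast as a span one ideal on an enlarged partition set $\Pi'$ (by absorbing blocks of length $s$ into single elements), so that the conjecture reduces to the span one case. From Theorem \ref{th:main}, or directly by $S$-tail decomposition, one reads off the $q$-difference system
\[
\mathbf{G}(x)=\mW(x)\,\mA\,\mathbf{G}(xq^S),\qquad \mathbf{G}(x):=(\mG_1(x),\ldots,\mG_K(x))^T,
\]
which encodes the fact that a partition with $S$-tail $\pi_k$ is uniquely obtained by attaching $\pi_k$ to a shifted partition whose own $S$-tail $\pi_j$ lies in $\cL(\pi_k)$. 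This reformulation converts the infinite matrix product of Theorem \ref{th:main} into a functional equation that is much more amenable to series analysis.

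The second step is to iterate this relation along walks in the directed graph whose adjacency matrix is $\mA$ and to recognise each iteration layer as a $q$-geometric series. Fixing an order of the states and, for each admissible walk, grouping consecutive visits to the same state into runs of lengths $n_1,n_2,\ldots,n_r$, the $t$-th run through state $\pi_{j_t}$ at ``level'' $N_t:=n_1+\cdots+n_{t-1}$ contributes
\[
\bigl(xq^{N_tS}\bigr)^{n_t\sharp(\pi_{j_t})}q^{n_t|\pi_{j_t}|},
\]
and the geometric sum $\sum_{n_t\ge 0}$ produces a denominator of the form $1/(q^{A_t};q^{A_t})_{n_t}$, where $A_t$ is the period of the self-loop at $\pi_{j_t}$. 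Collecting the total $q$-exponent yields a quadratic form $Q(n_1,\ldots,n_r)$ plus linear forms $L_2,L_3$ recording $|\pi_{j_t}|$ and $\sharp(\pi_{j_t})$; the outer $q$-factorial product and any signed $L_1$ appear whenever a sub-walk can be collapsed by the $q$-binomial theorem or Jacobi's triple product, as suggested by the ``factorization of column functional vectors'' mentioned in the abstract.

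The main obstacle is rigidity: for an arbitrary $\langle\Pi,\cL\rangle$ the directed graph may contain cycles or branchings whose generating functions are only piecewise quadratic in the exponent, or require summation indices whose denominators mix several moduli. To address this I would choose a spanning subtree of the transition graph, carry the primary iteration of the $q$-difference system along tree edges, and absorb each back edge through an additional summation variable whose geometric resummation closes into a clean quadratic contribution to $Q$. Verifying that this procedure always delivers a \emph{single} quadratic form with one modulus per summation index---rather than merely some multi-sum representation---is the delicate point, and is presumably why Andrews's statement remains a conjecture rather than a theorem.
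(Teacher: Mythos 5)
The statement you are trying to prove is stated in the paper as a \emph{conjecture} (due to Andrews), and the paper does not prove it; it only verifies instances of the associated Factorization Property in specific examples (\S\ref{sec:pf-1}, \S\ref{sec:pf-2}) and explicitly lists the general question as open in \S\ref{sec:rmks}. So there is no proof in the paper to compare against, and your proposal must stand on its own. It does not: by your own admission in the last sentence, the decisive step is left unverified, which means this is a heuristic outline rather than a proof.

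The concrete gaps are these. First, the reduction of an arbitrary linked partition ideal to a span one ideal is asserted without argument; the paper deliberately restricts to span one and only claims this covers ``most'' difference-at-a-distance sets. Second, and more seriously, the central analytic step is incorrect as stated: grouping a walk into runs of lengths $n_1,\ldots,n_r$ and summing each run geometrically produces factors of the form $1/(1-xq^{\,\cdot})$, not Pochhammer denominators $(q^{A_t};q^{A_t})_{n_t}$; obtaining genuine $q$-factorial denominators requires nontrivial $q$-series identities (Euler/Durfee-type arguments), which is precisely the content of the Factorization Property \eqref{eq:Fac-Prop} that the paper can only establish case by case via the recurrence of Theorem \ref{th:rec}. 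Third, the Andrews form \eqref{eq:And-conj-0} requires a \emph{fixed, finite} number $r$ of summation indices with a single quadratic form $Q$, whereas your walk/run decomposition yields data of unbounded length (the number of runs grows with the walk), so one would at best obtain a sum over compositions rather than an $r$-fold sum of the prescribed shape. Your proposed fix via a spanning subtree and resummed back edges is exactly where the difficulty lives, and no mechanism is given for why the resummation ``closes into a clean quadratic contribution''; in the paper this closure is achieved only by exhibiting an explicit binary tree for each example, and no general construction is known.
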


\medskip

By examining a number of examples in \cite{CL2018,KR2018,Kur2018,Kur2019}, it seems that in some cases the $\mG_k(x)$'s in Theorem \ref{th:main} are of a unified form of $q$-multi-summations. It motivates us to consider a matrix factorization problem involving column functional vectors of certain $q$-multi-summations. This, in turn, provides some crude ideas for the conjecture of Andrews.

Further, the algebraic method in \cite{CL2018} of proving generating function identities such as \eqref{eq:I_1} relies heavily computer algebra (\textit{Mathematica} packages \texttt{qMultiSum} \cite{Rie2003} and \texttt{qGeneratingFunctions} \cite{KK2009}). Now we are able to present a new approach to get rid of such computer assistance. 

\subsection{Outline of this paper}

This paper is organized as follows.

In \S{}\ref{sec:d-graph}, we first define the generating function for walks in a directed graph $G$. Then, by assigning an empty vertex to $G$, we obtain a modified directed graph $G^!$. The generating function of $G^!$ can be defined naturally. Now we merely need to define the associated directed graph of a span one linked partition ideal $\mI(\langle\Pi,\cL\rangle,S)$ and then deduce Theorem \ref{th:main} from the generating function of this associated directed graph.

In \S{}\ref{sec:q-mul-sum}, we will study a $q$-difference system arising from Theorem \ref{th:main}. Two examples will then be discussed: one example comes from the Rogers--Ramanujan identities and the other is about the Kanade--Russell conjectures $I_1$--$I_3$. Then, a matrix factorization problem will be identified from the two examples.

In \S{}\ref{sec:non-com-pf}, we turn to non-computer-assisted proofs of two identities obtained in \S{}\ref{sec:q-mul-sum}. The two identities, in turn, can be used to prove Andrews--Gordon type generating function identities for span one linked partition ideals. Our approach relies on a key recurrence relation obtained in \S{}\ref{sec:rec}. Also, we are able to illustrate the proofs by binary trees.

Finally, we are going to raise some open problems in \S{}\ref{sec:rmks}.

\section{Directed graphs}\label{sec:d-graph}

Let $G=(V,E)$ be a directed graph where $V$ is the set of vertices and $E$ is the set of directed edges. Throughout, we allow loops (that is, directed edges connecting vertices with themselves) in $G$ but for any two vertices $u$ and $v$, not necessarily distinct, we allow at most one directed edge connecting $u$ with $v$. Let $V=\{v_1,v_2,\ldots,v_K\}$. Let $\mA=\mA(G)$ be the adjacency matrix of $G$, that is,
\begin{align}\label{eq:adj-m}
\mA_{i,j}=\begin{cases}
1 & \text{if there is a directed edge from $v_i$ with $v_j$},\\
0 & \text{if there are no directed edges from $v_i$ with $v_j$}.
\end{cases}
\end{align}

We say that $w$ is a walk of step $M$ in $G$ if $w$ is a chain of $M+1$ vertices
$$\varpi_0\to\varpi_1\to\cdots\to\varpi_M$$
such that for each $1\le m\le M$, there is an edge from $\varpi_{m-1}$ to $\varpi_{m}$. Let $\mathcal{W}_M$ be the set of walks of step $M$ in $G$.

\subsection{Generating function for walks in a directed graph}\label{sec:gf-d-graph}

To define the generating function for step $M$ walks in a directed graph $G=(V,E)$, we assign two weights to each vertex $v$: one is called \textit{length}, denoted by $\sharp(v)\in\mathbb{N}$, and the other is called \textit{size}, denoted by $|v|\in\mathbb{N}$.

Let the \textit{shift} $S$ be a non-negative integer.

\medskip

For any walk $w\in\mathcal{W}_M$,
\begin{equation}\label{eq:chain-ver}
w=\varpi_0\to\varpi_1\to\cdots\to\varpi_M,
\end{equation}
we define its generating function by
\begin{align}
\mG(w\,|\, x,q):=x^{\sharp(\varpi_0)}q^{|\varpi_0|}\times (xq^S)^{\sharp(\varpi_1)}q^{|\varpi_1|} \times\cdots\times (xq^{MS})^{\sharp(\varpi_M)}q^{|\varpi_M|}.
\end{align}
Now we are able to define the generating function for step $M$ walks from $v_i$ to $v_j$ for any $1\le i,j\le K$:
\begin{align}
\mG_{i,j}(\mathcal{W}_M\,|\,x)=\mG_{i,j}(\mathcal{W}_M\,|\,x,q):=\sum_{\substack{w\in\mathcal{W}_M\\\varpi_0=v_i\\\varpi_M=v_j}}\mG(w\,|\, x,q).
\end{align}

Let us define a diagonal matrix $\mW(x)=\mW(x,q)$ by
\begin{equation}\label{eq:w-mat-graph}
\mW(x)=\begin{pmatrix}
x^{\sharp(v_1)}q^{|v_1|}\\
& x^{\sharp(v_2)}q^{|v_2|}\\
& & \ddots\\
& & & x^{\sharp(v_K)}q^{|v_K|}
\end{pmatrix}.
\end{equation}

\begin{theorem}\label{th:graph-1}
	Let $\mA$ be the adjacency matrix of $G$ and let $\mW(x)$ be as in \eqref{eq:w-mat-graph}. Then $\mG_{i,j}(\mathcal{W}_M\,|\,x)$ is the $(i,j)$-th entry of
	\begin{equation}\label{eq:graph-1}
	\mW(x).\mA.\mW(xq^S).\mA.\mW(xq^{2S}).\cdots. \mA.\mW(xq^{MS}).
	\end{equation}
\end{theorem}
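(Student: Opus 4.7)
\medskip

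\noindent\textbf{Proof proposal.} The plan is to prove Theorem \ref{th:graph-1} by directly expanding the matrix product in \eqref{eq:graph-1} and matching the resulting sum, term by term, against the definition of $\mG_{i,j}(\mathcal{W}_M\,|\,x)$. Since each $\mW(xq^{mS})$ is diagonal, the expansion is especially clean, and the combinatorial content of the adjacency matrix $\mA$ is precisely what restricts the sum to genuine walks.

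More concretely, I would first observe that for any index sequence $i_0,i_1,\ldots,i_M\in\{1,2,\ldots,K\}$ with $i_0=i$ and $i_M=j$, the corresponding contribution to the $(i,j)$-th entry of the product \eqref{eq:graph-1} equals
\begin{equation*}
\Bigl(\mW(x)\Bigr)_{i_0,i_0}\,\mA_{i_0,i_1}\,\Bigl(\mW(xq^S)\Bigr)_{i_1,i_1}\,\mA_{i_1,i_2}\cdots \mA_{i_{M-1},i_M}\,\Bigl(\mW(xq^{MS})\Bigr)_{i_M,i_M},
\end{equation*}
because $\mW(xq^{mS})$ being diagonal forces the row index coming out of the $m$-th factor to coincide with the column index going into it. Substituting the explicit entries from \eqref{eq:w-mat-graph}, this contribution becomes
\begin{equation*}
x^{\sharp(v_{i_0})}q^{|v_{i_0}|}\cdot \mA_{i_0,i_1}\cdot (xq^S)^{\sharp(v_{i_1})}q^{|v_{i_1}|}\cdot \mA_{i_1,i_2}\cdots \mA_{i_{M-1},i_M}\cdot (xq^{MS})^{\sharp(v_{i_M})}q^{|v_{i_M}|}.
\end{equation*}

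Next I would appeal to the definition \eqref{eq:adj-m} of the adjacency matrix: the factor $\mA_{i_{m-1},i_m}$ equals $1$ when $v_{i_{m-1}}\to v_{i_m}$ is a directed edge of $G$ and equals $0$ otherwise. Consequently, after summing over all intermediate indices $i_1,\ldots,i_{M-1}$ (as required by matrix multiplication), only those sequences $i_0\to i_1\to\cdots\to i_M$ that correspond to a step $M$ walk $w=\varpi_0\to\varpi_1\to\cdots\to\varpi_M$ in $G$ with $\varpi_0=v_i$ and $\varpi_M=v_j$ survive. For each such walk the surviving product is exactly $\mG(w\,|\,x,q)$ as defined just before \eqref{eq:w-mat-graph}, so the $(i,j)$-th entry of \eqref{eq:graph-1} sums to $\mG_{i,j}(\mathcal{W}_M\,|\,x)$.

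I do not expect any genuine obstacle here: the argument is essentially an unpacking of the matrix product, and the whole point of introducing the diagonal weight matrices $\mW(xq^{mS})$ in \eqref{eq:w-mat-graph} together with the adjacency matrix $\mA$ is so that their product encodes precisely the ``vertex weight times edge indicator'' structure of walks. If a reader would prefer a more formal presentation, the same identity can equivalently be proved by induction on $M$: the base case $M=0$ reduces to the fact that $\mW(x)$ is diagonal with the correct entries, and the inductive step factors a walk of length $M$ as a walk of length $M-1$ followed by one additional edge, which matches right-multiplication of the induction hypothesis by $\mA.\mW(xq^{MS})$.
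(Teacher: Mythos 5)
Your proposal is correct. Your primary argument --- expanding the $(i,j)$-th entry of the product as a sum over index sequences $i=i_0,i_1,\ldots,i_M=j$, noting that the diagonal matrices $\mW(xq^{mS})$ collapse the intermediate summation indices, and then using the $(0,1)$-entries of $\mA$ as indicators that kill every sequence which is not a genuine walk --- is a complete and valid proof; each surviving term is precisely $\mG(w\,|\,x,q)$, so the entry equals $\mG_{i,j}(\mathcal{W}_M\,|\,x)$. The paper instead proceeds by induction on $M$: the base case identifies $\mG_{i,j}(\mathcal{W}_0\,|\,x)$ with the entries of $\mW(x)$, and the inductive step splits a step-$(M+1)$ walk into a step-$M$ walk followed by one edge, matching right-multiplication by $\mA.\mW(xq^{(M+1)S})$ --- exactly the alternative you sketch in your final paragraph. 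The two presentations are essentially equivalent in content (the induction is just the path-sum expansion organized recursively); your direct expansion makes the ``vertex weight times edge indicator'' structure more visible at a glance, while the paper's induction avoids manipulating multi-index sums and is slightly more economical to typeset. Either is acceptable.
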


\begin{remark}
	Let us set $x=q=1$. Then $\mW(1,1)$ is a $K\times K$ identity matrix and hence \eqref{eq:graph-1} becomes $\mA^M$. Since $\mG_{i,j}(\mathcal{W}_M\,|\,1,1)$ equals the number of walks of step $M$ from vertex $v_i$ to vertex $v_j$, Theorem \ref{th:graph-1} immediately leads to a well-known result in graph theory:
\end{remark}

\begin{corollary}
	The number of walks of step $M$ from vertex $v_i$ to vertex $v_j$ is the $(i,j)$-th entry of $\mA^{M}$.
\end{corollary}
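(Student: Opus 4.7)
The plan is to obtain this corollary as an immediate specialization of Theorem \ref{th:graph-1} at $x = q = 1$, as the preceding remark already hints. First, I would observe that for any vertex $v_k$, the diagonal entry $x^{\sharp(v_k)} q^{|v_k|}$ of $\mW(x,q)$ evaluates to $1$ when $x = q = 1$, regardless of what the length $\sharp(v_k)$ and size $|v_k|$ happen to be. Hence $\mW(1,1) = I_K$, and, because the shift $S$ no longer affects anything, $\mW(q^{mS}, q)|_{x=q=1} = I_K$ for every $m$.

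Next, substituting into \eqref{eq:graph-1}, the product of weight matrices and adjacency matrices collapses telescopically to
\[
I_K \cdot \mA \cdot I_K \cdot \mA \cdots \mA \cdot I_K \;=\; \mA^M,
\]
so the $(i,j)$-th entry of the right-hand side of \eqref{eq:graph-1} becomes precisely $(\mA^M)_{i,j}$. On the left-hand side, each walk $w = \varpi_0 \to \varpi_1 \to \cdots \to \varpi_M$ contributes the monomial $\mG(w \mid 1,1)$, which is a product of factors of the form $1^{\sharp(\varpi_m)} \cdot 1^{|\varpi_m|} = 1$, hence equals $1$. Consequently
\[
\mG_{i,j}(\mathcal{W}_M \mid 1, 1) \;=\; \sum_{\substack{w \in \mathcal{W}_M \\ \varpi_0 = v_i,\, \varpi_M = v_j}} 1 \;=\; \#\{w \in \mathcal{W}_M : \varpi_0 = v_i,\ \varpi_M = v_j\},
\]
which is exactly the number of step-$M$ walks from $v_i$ to $v_j$. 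Equating the two sides yields the corollary.

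The main (and only) thing to check carefully is that the specialization $x = q = 1$ is legitimate: since $\mathcal{W}_M$ is a finite set for each fixed $M$, $\mG_{i,j}(\mathcal{W}_M \mid x, q)$ is a polynomial in $x, q$ with non-negative integer coefficients, so evaluation at $(1,1)$ poses no convergence issue and commutes with the finite matrix product in \eqref{eq:graph-1}. With this trivial bookkeeping in place, no further argument is needed.
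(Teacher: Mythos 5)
Your proposal is correct and follows exactly the paper's route: the remark preceding the corollary specializes Theorem \ref{th:graph-1} at $x=q=1$, notes that $\mW(1,1)$ is the identity so \eqref{eq:graph-1} collapses to $\mA^M$, and identifies $\mG_{i,j}(\mathcal{W}_M\,|\,1,1)$ with the walk count. Your extra remark that evaluation at $(1,1)$ is harmless because everything is a finite polynomial is a reasonable (if unnecessary) piece of bookkeeping.
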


\begin{proof}[Proof of Theorem \ref{th:graph-1}]
	We induct on $M$. When $M=0$, that is, the chain $w$ of vertices in \eqref{eq:chain-ver} contains only one vertex $\varpi_0$, it follows that
	$$\mG_{i,j}(\mathcal{W}_0\,|\,x)=\begin{cases}
	x^{\sharp(v_i)}q^{|v_i|} & \text{if $i=j$},\\
	0 & \text{if $i\ne j$},
	\end{cases}$$
	which is identical to the $(i,j)$-th entry of $\mW(x)$.
	
	Now let us assume that the theorem is true for some $M\ge 0$. We also write for convenience
	$$\mM(M)=\mW(x).\mA.\mW(xq^S).\mA.\mW(xq^{2S}).\cdots. \mA.\mW(xq^{MS}).$$
	Then $\mG_{i,j}(\mathcal{W}_M\,|\,x)=\mM(M)_{i,j}$. Further,
	\begin{align*}
	\mM(M+1)_{i,j}&=\sum_{k=1}^K \mM(M)_{i,k}\mA_{k,j}(xq^{(M+1)S})^{\sharp(v_j)}q^{|v_j|}\\
	&=\sum_{k=1}^K \mG_{i,k}(\mathcal{W}_M\,|\,x)\mA_{k,j}(xq^{(M+1)S})^{\sharp(v_j)}q^{|v_j|}.
	\end{align*}
	On the other hand,
	\begin{align*}
	\mG_{i,j}(\mathcal{W}_{M+1}\,|\,x)&=\sum_{\substack{w\in\mathcal{W}_{M+1}\\\varpi_0=v_i\\\varpi_M=v_j}}\mG(w\,|\, x,q)\\
	&=\sum_{k=1}^K \left(\sum_{\substack{w\in\mathcal{W}_{M}\\\varpi_0=v_i\\\varpi_M=v_k}}\mG(w\,|\, x,q)\right)\mA_{k,j}(xq^{(M+1)S})^{\sharp(v_j)}q^{|v_j|}\\
	&=\sum_{k=1}^K \mG_{i,k}(\mathcal{W}_M\,|\,x)\mA_{k,j}(xq^{(M+1)S})^{\sharp(v_j)}q^{|v_j|}.
	\end{align*}
	Hence, $\mG_{i,j}(\mathcal{W}_{M+1}\,|\,x)=\mM(M+1)_{i,j}$, which is our desired result.
\end{proof}

\subsection{Assigning an empty vertex}\label{sec:mod-d-g}

Let us assume that $v_1\in V$ is an empty vertex, that is, its length and size are both $0$:
\begin{equation}
\sharp(v_1)=0 \quad\text{and}\quad |v_1|=0.
\end{equation}
We also assume that, for $2\le k\le K$, $\sharp(v_k)$ and $|v_k|$ are both positive integers.

We require that, for each $1\le k\le K$, there is an edge from vertex $v_k$ to the empty vertex $v_1$. Hence, the entries in the first column of the adjacency matrix $\mA$ are all $1$.

We call such modified directed graph $G^!=(V^!,E^!)$.

\medskip

For any finite walk in $G^!$,
$$w=\varpi_0\to\varpi_1\to\cdots\to\varpi_M,$$
we may extend it to an infinite walk
$$w^\star=\varpi_0\to\varpi_1\to\cdots\to\varpi_M\to v_1\to v_1\to\cdots.$$
It follows from the assumptions $\sharp(v_1)=0$ and $|v_1|=0$ that
\begin{align}
\mG(w^\star\,|\, x,q)=\mG(w\,|\, x,q).
\end{align}
Let $\mathcal{W}^\star$ denote the set of infinite walks in $G^!$ ending with $v_1\to v_1\to\cdots$, a series of empty vertex.

We are now in the position to define the generating function of $G^!$, by
\begin{align}
\mG(G^!\,|\,x,q):=&\sum_{w^\star \in \mathcal{W}^\star}\mG(w^\star\,|\, x,q)\\
=&\sum_{M\ge 0}\sum_{\substack{w\in\mathcal{W}_M\\ w_M\ne v_1}}\mG(w\,|\, x,q).
\end{align}

\begin{theorem}\label{th:graph-inf}
	For each $1\le k\le K$, let $\mG_k(G^!\,|\,x)=\mG_k(G^!\,|\,x,q)$ denote the generating function for infinite walks in $\mathcal{W}^\star$ starting at $v_k$. Let the shift $S$ be a positive integer. Let $\mA$ and $\mW(x)$ be defined as in \eqref{eq:adj-m} and \eqref{eq:w-mat-graph}, respectively. Then, for $|q|<1$ and $|x|<|q|^{-1}$,
	\begin{equation}\label{eq:g-main}
	\begin{pmatrix}
	\mG_1(G^!\,|\,x)\\
	\mG_2(G^!\,|\,x)\\
	\vdots\\
	\mG_K(G^!\,|\,x)
	\end{pmatrix}
	=\mW(x).\left(\lim_{M\to\infty}\prod_{m=1}^M (\mA.\mW(x q^{mS}))\right).\begin{pmatrix}
	1\\
	0\\
	\vdots\\
	0
	\end{pmatrix}.
	\end{equation}
\end{theorem}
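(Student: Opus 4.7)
The plan is to deduce Theorem~\ref{th:graph-inf} from Theorem~\ref{th:graph-1} by a limiting argument that exploits the two special features of $v_1$: that it is weight-free ($\sharp(v_1) = |v_1| = 0$) and that every vertex has an outgoing edge to $v_1$ (so in particular $v_1$ carries a self-loop).

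First I would express the infinite-walk generating function as a limit of finite-walk generating functions. Because each appended $v_1$ contributes a factor of $1$, any finite walk $w$ of length $L$ from $v_k$ to $v_1$ extends to an infinite walk $w\to v_1\to v_1\to\cdots\in\mathcal{W}^\star$ starting at $v_k$ with the same $\mG$-weight. Conversely, every $w^\star\in\mathcal{W}^\star$ starting at $v_k$ arises from such an extension once $L$ is at least the index at which $w^\star$ stabilizes at $v_1$. Summing over all such walks of length $L$ and letting $L\to\infty$ gives
$$\mG_k(G^!\,|\,x) = \lim_{L \to \infty} \mG_{k,1}(\mathcal{W}_L\,|\,x).$$

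Next I would invoke Theorem~\ref{th:graph-1}, which identifies $\mG_{k,1}(\mathcal{W}_L\,|\,x)$ with the $(k,1)$-entry of the matrix $\mW(x).\mA.\mW(xq^S).\cdots.\mA.\mW(xq^{LS})$; equivalently, the $k$-th entry of the column vector $\mW(x).\bigl(\prod_{m=1}^L \mA.\mW(xq^{mS})\bigr).(1,0,\ldots,0)^T$. Combined with the first step, this gives the limit form of the desired identity \eqref{eq:g-main}.

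The main obstacle, and the final step, will be the analytic justification that the limit may be moved inside the matrix product, so that $\lim_{L\to\infty}\prod_{m=1}^L \mA.\mW(xq^{mS})$ exists entry-wise. Under the hypotheses $|q|<1$ and $|x|<|q|^{-1}$, for each $k\ge 2$ the diagonal entry $(xq^{mS})^{\sharp(v_k)}q^{|v_k|}$ of $\mW(xq^{mS})$ is bounded by $C|q|^{mS}$ (since $\sharp(v_k)\ge 1$), while the $(1,1)$-entry remains constantly $1$. I would decompose $\mA.\mW(xq^{mS}) = \mA.\mW_\infty + R_m$, where $\mW_\infty$ denotes the rank-one matrix whose unique nonzero entry is a $1$ in position $(1,1)$ and the entries of $R_m$ are $O(|q|^{mS})$. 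Expanding the partial product and bounding the tail by a convergent geometric series in $|q|^S$ should then yield the required entry-wise convergence, completing the proof of \eqref{eq:g-main}.
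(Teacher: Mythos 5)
Your proposal is correct and follows essentially the same route as the paper, whose proof is the one-line observation that $\mG_k(G^!\,|\,x)$ is the $(k,1)$-entry of $\mW(x).\lim_{M\to\infty}\prod_{m=1}^M(\mA.\mW(xq^{mS}))$; your steps 1 and 2 (identifying the infinite-walk sum as the limit of the finite-walk sums $\mG_{k,1}(\mathcal{W}_L\,|\,x)$ and invoking Theorem \ref{th:graph-1}) simply make that observation explicit. Your third step supplies a convergence justification that the paper omits entirely (it only computes the limit of a single factor $\mA.\mW(xq^{MS})$ in a remark), which is a welcome addition; just be careful when ``expanding the partial product'' that the individual expansion terms depend on $M$ through the trailing powers of the idempotent $\mA.\mW_\infty$, so it is cleanest to bound $\|P_M\|$ uniformly via $\prod_m(1+O(|q|^{mS}))$ and then show the first column of $P_M$ is Cauchy.
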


\begin{proof}
	We simply observe that, for each $1\le k\le K$, $\mG_k(G^!\,|\,x)$ is the $(k,1)$-th entry of
	$$\mW(x).\left(\lim_{M\to\infty}\prod_{m=1}^M (\mA.\mW(x q^{mS}))\right).$$
	The desired result therefore follows.
\end{proof}

\subsection{Proof of Theorem \ref{th:main}}\label{sec:proof-th-main}

To prove Theorem \ref{th:main}, let us define the \textit{associated directed graph} of a span one linked partition ideal $\mI=\mI(\langle\Pi,\cL\rangle,S)$.

We first define the set of vertices. Since $\Pi=\{\pi_1,\pi_2,\ldots,\pi_K\}$ is a finite set of partitions, we may treat each $\pi_k$ as a vertex. We also define the length of $\pi_k$ as the number of parts in $\pi_k$ and the size of $\pi_k$ as the sum of all parts in $\pi_k$. In particular, since $\pi_1$ is an empty partition so that $\sharp(\pi_1)=0$ and $|\pi_1|=0$, we may treat $\pi_1$ as an empty vertex.

We next define the directed edges in a natural way. For $1\le i,j\le K$, if $\pi_j\in\cL(\pi_i)$, then we say that there is an edge from vertex $\pi_i$ to vertex $\pi_j$. Since $\cL(\pi_1)=\cL(\emptyset)=\Pi$, we know that, for each $1\le k\le K$, there is an edge from vertex $\pi_k$ to vertex $\pi_1$.

We call this graph the associated directed graph of $\mI$, denoted by $G^!(\mI)=(V^!(\mI),E^!(\mI))$. In fact, $G^!(\mI)$ is a modified directed graph described in \S{}\ref{sec:mod-d-g}.

\medskip

Recall from \eqref{eq:decomp} that each partition $\lambda$ in $\mI$ can be uniquely decomposed as
$$\lambda=\lambda_0\oplus\phi^S(\lambda_1)\oplus\phi^{2S}(\lambda_2)\oplus\cdots\oplus \phi^{KS}(\lambda_K)\oplus \phi^{(K+1)S}(\emptyset)\oplus \phi^{(K+2)S}(\emptyset)\oplus\cdots$$
so that $\lambda_K\ne \emptyset$ as long as $\lambda\ne \emptyset$. Hence, we have a natural bijection to infinite walks in $G^!(\mI)$ ending with $\pi_1\to\pi_1\to\cdots$:
$$w^\star(\lambda)=\lambda_0\to\lambda_1\to\lambda_2\to\cdots\to\lambda_K\to \pi_1\to\pi_1\to\cdots.$$
Further, if $\lambda$ is an empty partition, then the resulted infinite walk is simply $\pi_1\to\pi_1\to\cdots$.

Now let us define $S$ to be the shift. Then
\begin{equation}
x^{\sharp(\lambda)}q^{|\lambda|}=\mG(w^\star(\lambda)\,|\, x,q).
\end{equation}
Hence,
$$\mG(x)=\sum_{\lambda\in\mI}x^{\sharp(\lambda)}q^{|\lambda|}=\sum_{w^\star \in \mathcal{W}^\star}\mG(w^\star\,|\, x,q).$$

\medskip

The rest follows directly from Theorem \ref{th:graph-inf}.

\medskip

\begin{example}\label{ex:2.1}
	It is shown in Example \ref{ex:1.1} that partitions with difference at least $2$ at distance $1$ form a span one linked partition ideal $\mI(\langle\Pi,\cL\rangle,S)$ where $\Pi=\{\emptyset,1,2\}$, the linking sets are
	$$\cL(\emptyset)=\{\emptyset,1,2\},\quad \cL(1)=\{\emptyset,1,2\},\quad \cL(2)=\{\emptyset,2\},$$
	and $S=2$. We represent its associated directed graph in Fig.~\ref{fig:a-d-g}.
	
	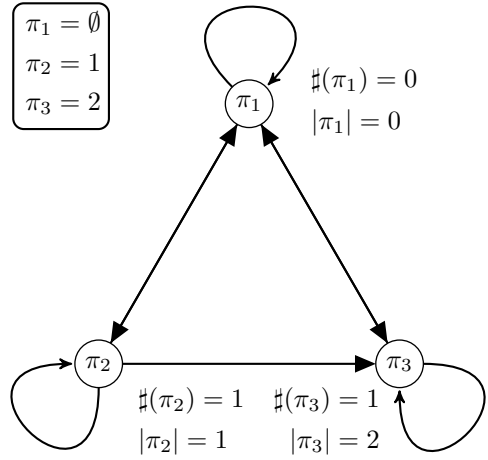
\begin{figure}[ht]
		\caption{The associated directed graph in Example \ref{ex:2.1}}\label{fig:a-d-g}
		\medskip
		\begin{tikzpicture}
		\SetVertexMath
		\Vertex[Math,L=\pi_1,x=2,y=0]{A}
		\Vertex[Math,L=\pi_2,x=0,y=-1.73*2]{B}
		\Vertex[Math,L=\pi_3,x=4,y=-1.73*2]{C}
		\tikzset{EdgeStyle/.style={->,>=triangle 45}}
		\Edge(A)(B)
		\Edge(A)(C)
		\Edge(B)(A)
		\Edge(B)(C)
		\Edge(C)(A)
		\Loop[dir=NO,dist=2cm](A)
		\Loop[dir=SOWE,dist=2cm](B)
		\Loop[dir=SOEA,dist=2cm](C)
		\node (w1) at (3.5,0) {%
			$\begin{aligned}
			& \sharp(\pi_1)=0\\
			& |\pi_1|=0
			\end{aligned}$};
		\node (w2) at (1.2,-4.25) {%
			$\begin{aligned}
			& \sharp(\pi_2)=1\\
			& |\pi_2|=1
			\end{aligned}$};
		\node (w3) at (3,-4.25) {%
			$\begin{aligned}
			\sharp(\pi_3)=1 &\\
			|\pi_3|=2 &
			\end{aligned}$};
		\node[draw=black,thick,rectangle,inner sep=3pt, rounded corners=4pt] (w) at (-0.5,0.5) {%
			$\begin{aligned}
			\pi_1&=\emptyset\\
			\pi_2&=1\\
			\pi_3&=2
			\end{aligned}$};
		\end{tikzpicture}
	\end{figure}
\end{example}

\section{$q$-Multi-summations}\label{sec:q-mul-sum}

\subsection{A $q$-difference system and the uniqueness of solutions}\label{sec:q-diff-sys}

Recall that in Theorem \ref{th:main} we have shown that
\begin{equation}
\begin{pmatrix}
\mG_1(x)\\
\mG_2(x)\\
\vdots\\
\mG_K(x)
\end{pmatrix}
=\mW(x).\left(\lim_{M\to\infty}\prod_{m=1}^M (\mA.\mW(x q^{mS}))\right).\begin{pmatrix}
1\\
0\\
\vdots\\
0
\end{pmatrix}.
\end{equation}
Let us focus on
\begin{equation}
\begin{pmatrix}
F_1^\star(x)\\
F_2^\star(x)\\
\vdots\\
F_K^\star(x)
\end{pmatrix}
:=\left(\lim_{M\to\infty}\prod_{m=1}^M (\mA.\mW(x q^{mS}))\right).\begin{pmatrix}
1\\
0\\
\vdots\\
0
\end{pmatrix}.
\end{equation}
Notice that
\begin{align*}
\begin{pmatrix}
F_1^\star(x)\\
F_2^\star(x)\\
\vdots\\
F_K^\star(x)
\end{pmatrix}
&=\left(\lim_{M\to\infty}\prod_{m=1}^M (\mA.\mW(x q^{mS}))\right).\begin{pmatrix}
1\\
0\\
\vdots\\
0
\end{pmatrix}\\
&=
\mA.\mW(xq^S).\left(\lim_{M\to\infty}\prod_{m=1}^M (\mA.\mW(xq^S q^{mS}))\right).\begin{pmatrix}
1\\
0\\
\vdots\\
0
\end{pmatrix}\\
&=\mA.\mW(xq^S).\begin{pmatrix}
F_1^\star(xq^S)\\
F_2^\star(xq^S)\\
\vdots\\
F_K^\star(xq^S)
\end{pmatrix}.
\end{align*}
If we further write $F_k(x):=F_k^\star(xq^{-S})$ for each $k$, then the column vector
$$\underline{\mathbf{F}}(x):=\begin{pmatrix}
F_1(x)\\
F_2(x)\\
\vdots\\
F_K(x)
\end{pmatrix}$$
satisfies the $q$-difference system
\begin{equation}\label{eq:q-diff-sys}
\underline{\mathbf{F}}(x)=\mA.\mW(x).\underline{\mathbf{F}}(xq^S).
\end{equation}

\medskip

\begin{remark}\label{re:F_0}
It follows from \eqref{eq:q-diff-sys} that
\begin{equation}\label{eq:F-G}
\underline{\mathbf{F}}(x)=\mA.\mW(x).\begin{pmatrix}
F_1^\star(x)\\
F_2^\star(x)\\
\vdots\\
F_K^\star(x)
\end{pmatrix}=\mA.\begin{pmatrix}
\mG_1(x)\\
\mG_2(x)\\
\vdots\\
\mG_K(x)
\end{pmatrix}.
\end{equation}
Recall that, we have defined in Theorem \ref{th:main} that, for each $1\le k\le K$, $\mI_k$ denotes the subset of partitions in $\mI(\langle\Pi,\cL\rangle,S)$ whose $S$-tail is $\pi_k$. Further, $\mG_k(x)$ is the generating function of $\mI_k$. Since $\mA$ is a $(0,1)$-matrix, it follows that $F_k(x)\in\mathbb{Z}[[q]][[x]]$ for each $1\le k\le K$. More importantly, since the empty partition $\emptyset$ is contained in $\mI_1$ but not in $\mI_k$ for $2\le k\le K$, we have $\mG_1(0)=1$ and $\mG_k(0)=0$ for $2\le k\le K$. Since the entries in the first column of $\mA$ are all $1$, it follows that
\begin{equation}
F_1(0)=F_2(0)=\cdots=F_K(0)=1.
\end{equation}
\end{remark}

\medskip

We next show the uniqueness of solutions of \eqref{eq:q-diff-sys}.

\begin{proposition}\label{prop:uni}
	In the $q$-difference system \eqref{eq:q-diff-sys}, we assume that, for each $1\le k\le K$, $F_k(x)\in\mathbb{C}[[q]][[x]]$. If $F_1(0)=F_2(0)=\cdots=F_K(0)$, then there exists a solution to \eqref{eq:q-diff-sys}. Further, the solution is uniquely determined by $\underline{\mathbf{F}}(0)$.
\end{proposition}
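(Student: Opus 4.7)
The plan is to proceed coefficient by coefficient in the $x$-expansion and extract from the $q$-difference system a triangular recurrence. Writing each unknown as
\[
F_k(x) = \sum_{n \ge 0} f_{k,n}(q)\, x^n, \qquad f_{k,n}(q) \in \mathbb{C}[[q]],
\]
and using that $\pi_1$ is the empty partition (so $\sharp(\pi_1) = |\pi_1| = 0$), that $\mA_{k,1} = 1$ for every $k$, and that $\sharp(\pi_j) \ge 1$ for every $j \ge 2$, the $k$-th component of $\ubF(x) = \mA.\mW(x).\ubF(xq^S)$ reads
\[
F_k(x) = F_1(xq^S) + \sum_{j=2}^K \mA_{k,j}\, x^{\sharp(\pi_j)} q^{|\pi_j|}\, F_j(xq^S).
\]
Extracting the coefficient of $x^n$ would yield
\[
f_{k,n}(q) - q^{Sn}\, f_{1,n}(q) = \sum_{\substack{2 \le j \le K \\ \sharp(\pi_j) \le n}} \mA_{k,j}\, q^{|\pi_j| + S(n-\sharp(\pi_j))}\, f_{j,\, n-\sharp(\pi_j)}(q).
\]

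The crucial observation is that the right-hand side depends only on coefficients $f_{j,m}(q)$ with $m < n$, since every term in the sum carries $\sharp(\pi_j) \ge 1$. At the base case $n = 0$ the right-hand side is empty, so the equation collapses to $f_{k,0}(q) = f_{1,0}(q)$ for all $k$; this is precisely the hypothesis $F_1(0) = \cdots = F_K(0)$, which therefore fixes $\ubF(0)$ consistently. For each $n \ge 1$, I would view the display above as a $K \times K$ linear system in the unknowns $f_{1,n}(q), \ldots, f_{K,n}(q)$ with coefficient matrix $\mathbf{I} - q^{Sn} E$, where $E$ has first column all ones and remaining columns zero; a single-row expansion gives $\det(\mathbf{I} - q^{Sn} E) = 1 - q^{Sn}$.

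Because $S$ is a positive integer and $n \ge 1$, the scalar $1 - q^{Sn}$ is a unit in $\mathbb{C}[[q]]$, so the linear system admits a unique solution in $\mathbb{C}[[q]]^K$, and each $f_{k,n}(q)$ is thereby expressed as an honest element of $\mathbb{C}[[q]]$ in terms of lower-order data. Inducting on $n$, this simultaneously builds a solution $\ubF(x)$ and shows that it is completely determined by the common value $\ubF(0)$. The only step where I expect to need care is confirming that $1 - q^{Sn}$ really is invertible in $\mathbb{C}[[q]]$ for every $n \ge 1$; this hinges on the standing assumption $S \ge 1$, which guarantees that $q^{Sn}$ has strictly positive $q$-valuation and hence that the recursion stays inside $\mathbb{C}[[q]][[x]]$ rather than escaping into Laurent or rational series.
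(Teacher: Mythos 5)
Your proof is correct and follows essentially the same route as the paper: expand each $F_k$ in powers of $x$, derive the coefficient recurrence, note that the $n=0$ case forces $f_{k,0}=f_{1,0}$ (matching the hypothesis), and solve level by level by induction on $n$. You are in fact slightly more explicit than the paper, which simply asserts that \eqref{eq:f_k(n)} determines the higher coefficients, whereas you spell out the needed inversion of the coefficient matrix $\mathbf{I}-q^{Sn}E$ via the unit $1-q^{Sn}$ in $\mathbb{C}[[q]]$.
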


\begin{proof}
	For each $1\le k\le K$, let us write
	$$F_k(x)=\sum_{n\ge 0}f_k(n)x^n,$$
	where $f_k(n)\in\mathbb{C}[[q]]$ for $n\ge 0$. We also write for notational convenience that $f_k(n)=0$ for $n<0$. Then,
	\begin{align*}
	\sum_{n\ge 0}f_k(n)x^n &= \sum_{j=1}^K \mA_{k,j}x^{\sharp(\pi_j)}q^{|\pi_j|}\sum_{n\ge 0}f_j(n)q^{nS}x^n\\
	&=\sum_{n\ge 0}\left(\sum_{j=1}^K \mA_{k,j}q^{|\pi_j|+(n-\sharp(\pi_j))S}f_j(n-\sharp(\pi_j))\right)x^n.
	\end{align*}
	Recall that $\sharp(\pi_1)=|\pi_1|=0$ and $\mA_{k,1}=1$ for all $k$. We have that, for $n\ge 0$,
	\begin{align}\label{eq:f_k(n)}
	f_k(n)=q^{nS}f_1(n)+\sum_{j=2}^K \mA_{k,j}q^{|\pi_j|+(n-\sharp(\pi_j))S}f_j(n-\sharp(\pi_j)).
	\end{align}
	Setting $n=0$ gives the requirement $F_1(0)=F_2(0)=\cdots=F_K(0)$. Also, $\underline{\mathbf{F}}(0)=(f_1(0),f_2(0),\ldots,f_K(0))^{T}$ uniquely determines $f_k(n)$ for all $1\le k\le K$ and $n\ge 1$ by \eqref{eq:f_k(n)}.
\end{proof}

\subsection{Two examples}\label{sec:ex}

Recall that, for each $1\le k\le K$, $\mI_k$ denotes the subset of partitions in $\mI(\langle\Pi,\cL\rangle,S)$ whose $S$-tail is $\pi_k$. Further,
$$\mG_k(x)=\sum_{\lambda\in\mI_k}x^{\sharp(\lambda)}q^{|\lambda|}.$$

\subsubsection{Example 1}\label{sec:ex1}

In the first example, we consider
\begin{center}
	``partitions with difference at least $2$ at distance $1$.''
\end{center}
This partition set obviously corresponds to the Rogers--Ramanujan identities. In Example \ref{ex:1.1}, we have shown that it is a span one linked partition ideal $\mI(\langle\Pi,\cL\rangle,S)$ where $\Pi=\{\pi_1,\pi_2,\pi_3\}$ with $\pi_1=\emptyset$, $\pi_2=1$ and $\pi_3=2$, the linking sets are
$$\cL(\pi_1)=\{\pi_1,\pi_2,\pi_3\},\quad \cL(\pi_2)=\{\pi_1,\pi_2,\pi_3\},\quad \cL(\pi_3)=\{\pi_1,\pi_3\},$$
and $S=2$.

Notice that the generating function for partitions with difference at least $2$ at distance $1$ is
\begin{equation}\label{eq:ex-1-g1}
\mG_1(x)+\mG_2(x)+\mG_3(x)=\sum_{n\ge 0}\frac{q^{n^2}x^n}{(q;q)_n}
\end{equation}
and that the generating function for partitions with difference at least $2$ at distance $1$ with the smallest part $\ge 2$ is
\begin{equation}\label{eq:ex-1-g3}
\mG_1(x)+\mG_3(x)=\sum_{n\ge 0}\frac{q^{n^2+n}x^n}{(q;q)_n}.
\end{equation}

We know from \eqref{eq:F-G} that
$$\begin{pmatrix}
F_1(x)\\
F_2(x)\\
F_3(x)
\end{pmatrix}=\mA.\begin{pmatrix}
\mG_1(x)\\
\mG_2(x)\\
\mG_3(x)
\end{pmatrix}
=\begin{pmatrix}
1 & 1 & 1\\
1 & 1 & 1\\
1 & 0 & 1
\end{pmatrix}.\begin{pmatrix}
\mG_1(x)\\
\mG_2(x)\\
\mG_3(x)
\end{pmatrix}.$$
Hence, by \eqref{eq:ex-1-g1} and \eqref{eq:ex-1-g3}, if we put
\begin{gather}
F_1(x)=F_2(x)=\sum_{n\ge 0}\frac{q^{n^2}x^n}{(q;q)_n}\\
\intertext{and}
F_3(x)=\sum_{n\ge 0}\frac{q^{n^2+n}x^n}{(q;q)_n},
\end{gather}
then we have the following relation from \eqref{eq:q-diff-sys}:
\begin{equation}\label{eq:q-diff-sys-ex1}
\begin{pmatrix}
F_1(x)\\
F_2(x)\\
F_3(x)
\end{pmatrix}=\begin{pmatrix}
1 & 1 & 1\\
1 & 1 & 1\\
1 & 0 & 1
\end{pmatrix}.\begin{pmatrix}
1\\
& xq\\
& & xq^2
\end{pmatrix}.\begin{pmatrix}
F_1(xq^2)\\
F_2(xq^2)\\
F_3(xq^2)
\end{pmatrix}
\end{equation}

Conversely, if we are able to prove \eqref{eq:q-diff-sys-ex1} directly (notice that $F_1(0)=F_2(0)=F_3(0)=1$), then by Remark \ref{re:F_0} and Proposition \ref{prop:uni}, we can compute that
\begin{align*}
\begin{pmatrix}
	\mG_1(x)\\
	\mG_2(x)\\
	\mG_3(x)
\end{pmatrix}&=
\begin{pmatrix}
1\\
& xq\\
& & xq^2
\end{pmatrix}.\begin{pmatrix}
F_1^\star(x)\\
F_2^\star(x)\\
F_3^\star(x)
\end{pmatrix}\\
&=\begin{pmatrix}
1\\
& xq\\
& & xq^2
\end{pmatrix}.\begin{pmatrix}
F_1(xq^2)\\
F_2(xq^2)\\
F_3(xq^2)
\end{pmatrix}.
\end{align*}
Also, \eqref{eq:ex-1-g1} and \eqref{eq:ex-1-g3} can be deduced with no difficulty.

\subsubsection{Example 2}\label{sec:ex2}

In the second example, we consider
\begin{quote}
	``partitions with difference at least $3$ at distance $2$ such that if two consecutive parts differ by at most $1$, then their sum is divisible by $3$.''
\end{quote}
This partition set corresponds to the Kanade--Russell conjectures $I_1$--$I_3$. It was shown in \cite{CL2018} that this partition set is a span one linked partition ideal $\mI(\langle\Pi,\cL\rangle,S)$ where $S=3$, and $\Pi=\{\pi_1,\pi_2,\ldots,\pi_7\}$ along with the linking sets are given as follows.
\begin{equation*}
\begin{array}{cp{0.5cm}c}
\Pi && \text{linking set}\\
\pi_1 = \emptyset && \{\pi_1,\; \pi_2,\; \pi_3,\; \pi_4,\; \pi_5,\; \pi_6,\; \pi_7\}\\
\pi_2 = 1 && \{\pi_1,\; \pi_2,\; \pi_3,\; \pi_4,\; \pi_5,\; \pi_6,\; \pi_7\}\\
\pi_3 = 2+1 && \{\pi_1,\; \pi_2,\; \pi_3,\; \pi_4,\; \pi_5,\; \pi_6,\; \pi_7\}\\
\pi_4 = 3+1 && \{\pi_1,\; \pi_5,\; \pi_6,\; \pi_7\}\\
\pi_5 = 2 && \{\pi_1,\; \pi_2,\; \pi_3,\; \pi_4,\; \pi_5,\; \pi_6,\; \pi_7\}\\
\pi_6 = 3 && \{\pi_1,\; \pi_5,\; \pi_6,\; \pi_7\}\\
\pi_7 = 3+3 && \{\pi_1,\; \pi_6,\; \pi_7\}
\end{array}
\end{equation*}

It was also shown in \cite{CL2018} that the generating function for such partitions is
\begin{equation}
\begin{aligned}
\mG_1(x)&+\mG_2(x)+\mG_3(x)+\mG_4(x)\\
&+\mG_5(x)+\mG_6(x)+\mG_7(x)
\end{aligned}
=\sum_{n_1,n_2\ge 0}\frac{q^{n_1^2+3n_2^2+3n_1n_2}x^{n_1+2n_2}}{(q;q)_{n_1} (q^3;q^3)_{n_2}},\label{eq:ex-2-t11}
\end{equation}
that the generating function for such partitions with the smallest part $\ge 2$ is
\begin{equation}
\mG_1(x)+\mG_5(x)+\mG_6(x)+\mG_7(x)
=\sum_{n_1,n_2\ge 0}\frac{q^{n_1^2+3n_2^2+3n_1n_2+n_1+3n_2}x^{n_1+2n_2}}{(q;q)_{n_1} (q^3;q^3)_{n_2}},\label{eq:ex-2-t12}
\end{equation}
and that the generating function for such partitions with the smallest part $\ge 3$ is
\begin{equation}
\mG_1(x)+\mG_6(x)+\mG_7(x)
=\sum_{n_1,n_2\ge 0}\frac{q^{n_1^2+3n_2^2+3n_1n_2+2n_1+3n_2}x^{n_1+2n_2}}{(q;q)_{n_1} (q^3;q^3)_{n_2}}.\label{eq:ex-2-t13}
\end{equation}

We know from \eqref{eq:F-G} that
$$\begin{pmatrix}
F_1(x)\\
F_2(x)\\
F_3(x)\\
F_4(x)\\
F_5(x)\\
F_6(x)\\
F_7(x)
\end{pmatrix}
=\begin{pmatrix}
1 & 1 & 1 & 1 & 1 & 1 & 1\\
1 & 1 & 1 & 1 & 1 & 1 & 1\\
1 & 1 & 1 & 1 & 1 & 1 & 1\\
1 & 0 & 0 & 0 & 1 & 1 & 1\\
1 & 1 & 1 & 1 & 1 & 1 & 1\\
1 & 0 & 0 & 0 & 1 & 1 & 1\\
1 & 0 & 0 & 0 & 0 & 1 & 1\\
\end{pmatrix}.\begin{pmatrix}
\mG_1(x)\\
\mG_2(x)\\
\mG_3(x)\\
\mG_4(x)\\
\mG_5(x)\\
\mG_6(x)\\
\mG_7(x)
\end{pmatrix}.$$
Hence, by \eqref{eq:ex-2-t11}, \eqref{eq:ex-2-t12} and \eqref{eq:ex-2-t13}, if we put
\begin{gather}
F_1(x)=F_2(x)=F_3(x)=F_5(x)=\sum_{n_1,n_2\ge 0}\frac{q^{n_1^2+3n_2^2+3n_1n_2}x^{n_1+2n_2}}{(q;q)_{n_1} (q^3;q^3)_{n_2}},\\
F_4(x)=F_6(x)=\sum_{n_1,n_2\ge 0}\frac{q^{n_1^2+3n_2^2+3n_1n_2+n_1+3n_2}x^{n_1+2n_2}}{(q;q)_{n_1} (q^3;q^3)_{n_2}}
\intertext{and}
F_7(x)=\sum_{n_1,n_2\ge 0}\frac{q^{n_1^2+3n_2^2+3n_1n_2+2n_1+3n_2}x^{n_1+2n_2}}{(q;q)_{n_1} (q^3;q^3)_{n_2}},
\end{gather}
then we have the following relation from \eqref{eq:q-diff-sys}:
\begin{equation}\label{eq:q-diff-sys-ex2}
\scalebox{0.865}{%
$\begin{pmatrix}
F_1(x)\\
F_2(x)\\
F_3(x)\\
F_4(x)\\
F_5(x)\\
F_6(x)\\
F_7(x)
\end{pmatrix}
=\begin{pmatrix}
1 & 1 & 1 & 1 & 1 & 1 & 1\\
1 & 1 & 1 & 1 & 1 & 1 & 1\\
1 & 1 & 1 & 1 & 1 & 1 & 1\\
1 & 0 & 0 & 0 & 1 & 1 & 1\\
1 & 1 & 1 & 1 & 1 & 1 & 1\\
1 & 0 & 0 & 0 & 1 & 1 & 1\\
1 & 0 & 0 & 0 & 0 & 1 & 1\\
\end{pmatrix}.\begin{pmatrix}
1\\
& xq\\
&& x^2q^3\\
&&& x^2q^4\\
&&&& xq^2\\
&&&&& xq^3\\
&&&&&& x^2q^6
\end{pmatrix}.\begin{pmatrix}
F_1(xq^3)\\
F_2(xq^3)\\
F_3(xq^3)\\
F_4(xq^3)\\
F_5(xq^3)\\
F_6(xq^3)\\
F_7(xq^3)
\end{pmatrix}$}.
\end{equation}

Conversely, we are also able to recover
$$(\mG_1(x),\mG_2(x),\mG_3(x),\mG_4(x),\mG_5(x),\mG_6(x),\mG_7(x))^T$$
as well as \eqref{eq:ex-2-t11}, \eqref{eq:ex-2-t12} and \eqref{eq:ex-2-t13} provided that we have proved \eqref{eq:q-diff-sys-ex1} directly since $F_1(0)=F_2(0)=\cdots=F_7(0)=1$.

\subsection{A matrix factorization problem}\label{sec:frac}

Motivated by \eqref{eq:q-diff-sys-ex1} and \eqref{eq:q-diff-sys-ex2}, we turn our interest to a matrix factorization problem as follows.

\medskip

Let $R$ be a positive integer. Let $\underline{\boldsymbol{\alpha}}=(\alpha_{i,j})\in\Mat_{R\times R}(\mathbb{N})$ be a fixed symmetric matrix. Let $\underline{\mathbf{A}}=(A_r)\in \mathbb{N}_{>0}^R$ and $\underline{\boldsymbol{\gamma}}=(\gamma_r)\in \mathbb{N}_{>0}^R$ be fixed.

Let $\fF$ be a set of $q$-multi-summations defined by
\begin{align}
\fF:=\big\{H(\ubb)\,:\, \ubb\in\mathbb{Z}^R\ \text{and condition \eqref{eq:condition} is satisfied}\big\},
\end{align}
where $H(\ubb)=H(\beta_1,\ldots,\beta_R)$ is of the form
\begin{align}\label{eq:F-beta}
H(\ubb):=\sum_{n_1,\ldots,n_R\ge 0}\frac{q^{\sum_{r=1}^R \alpha_{r,r}n_r(n_r-1)/2}q^{\sum_{1\le i< j\le R}\alpha_{i,j}n_i n_j}q^{\sum_{r=1}^R \beta_r n_r}x^{\sum_{r=1}^R \gamma_r n_r}}{(q^{A_1};q^{A_1})_{n_1}\cdots (q^{A_R};q^{A_R})_{n_R}}
\end{align}
and the additional condition reads: for all $(n_1,\ldots,n_R)\in \mathbb{N}^R\backslash \{(0,0,\ldots,0)\}$,
\begin{align}\label{eq:condition}
&\sum_{r=1}^R \frac{\alpha_{r,r}n_r(n_r-1)}{2}+\sum_{1\le i< j\le R}\alpha_{i,j}n_i n_j+\sum_{r=1}^R \beta_r n_r>0.
\end{align}

\medskip

Now we consider a column functional vector
\begin{equation}
\ubF_{\ubb}(x)=\begin{pmatrix}
F_1(x)\\
F_2(x)\\
\vdots\\
F_K(x)
\end{pmatrix}:=
\begin{pmatrix}
H(\ubb_1)\\
H(\ubb_2)\\
\vdots\\
H(\ubb_K)
\end{pmatrix},
\end{equation}
where $H(\ubb_k)\in\fF$ for all $1\le k\le K$.

We expect $\ubF_{\ubb}(x)$ to satisfy the following factorization property.

\medskip

\noindent \textbf{Factorization Property.} Let $\mU$ be a $(0,1)$-matrix such that all entries in the first row and column are $1$. Let $\mV$ be a diagonal matrix such that all (diagonal) entries are monic monomials in $x$ and $q$ with $\mV_{1,1}=1$. We say that $\ubF_{\ubb}(x)$ satisfies the \textit{Factorization Property} if
\begin{equation}\label{eq:Fac-Prop}
\ubF_{\ubb}(x) = \mU.\mV.\ubF_{\ubb}(xq^S)
\end{equation}
for some positive integer $S$.

\medskip

\begin{example}\label{ex:1-1}
	In the example in \S{}\ref{sec:ex1}, we have $\uba=\begin{pmatrix}
	2
	\end{pmatrix}$, $\ubg=(1)$, $\ubA=(1)$ and
	$$\ubF_{\ubb}(x)=
	\begin{pmatrix}
	H(1)\\
	H(1)\\
	H(2)
	\end{pmatrix}.$$
	Also, $S=2$.
\end{example}

\begin{example}\label{ex:2-2}
	In the example in \S{}\ref{sec:ex2}, we have $\uba=\begin{pmatrix}
	2 & 3\\3 & 6
	\end{pmatrix}$, $\ubg=(1,2)$, $\ubA=(1,3)$ and
	$$\ubF_{\ubb}(x)=
	\begin{pmatrix}
	H(1,3)\\
	H(1,3)\\
	H(1,3)\\
	H(2,6)\\
	H(1,3)\\
	H(2,6)\\
	H(3,6)
	\end{pmatrix}.$$
	Also, $S=3$.
\end{example}

\section{Non-computer-assisted proofs}\label{sec:non-com-pf}

In \cite{CL2018}, Li and the author provided an algebraic method to prove Andrews--Gordon type generating function identities such as \eqref{eq:ex-2-t11}, \eqref{eq:ex-2-t12} and \eqref{eq:ex-2-t13}. However, one defect in that work is that the proofs rely heavily on computer assistance. Our aim here is to overcome this problem.

As we have seen in \S{}\ref{sec:ex2}, to prove \eqref{eq:ex-2-t11}, \eqref{eq:ex-2-t12} and \eqref{eq:ex-2-t13}, it suffices to show \eqref{eq:q-diff-sys-ex2}.

Our starting point is a recurrence relation enjoyed by $H(\beta_1,\ldots,\beta_R)$ defined in \eqref{eq:F-beta}.

\subsection{A recurrence relation}\label{sec:rec}

Recall that
\begin{align*}
&H(\beta_1,\ldots,\beta_R)\\
&\quad=\sum_{n_1,\ldots,n_R\ge 0}\frac{q^{\sum_{r=1}^R \alpha_{r,r}n_r(n_r-1)/2}q^{\sum_{1\le i< j\le R}\alpha_{i,j}n_i n_j}q^{\sum_{r=1}^R \beta_r n_r}x^{\sum_{r=1}^R \gamma_r n_r}}{(q^{A_1};q^{A_1})_{n_1}\cdots (q^{A_R};q^{A_R})_{n_R}}.
\end{align*}

\medskip
\begin{theorem}\label{th:rec}
	For $1\le r\le R$, we have
	\begin{align}
	H(\beta_1,\ldots,\beta_r,\ldots,\beta_R)&=H(\beta_1,\ldots,\beta_r+A_r,\ldots,\beta_R)\notag\\
	&+x^{\gamma_r}q^{\beta_r}H(\beta_1+\alpha_{r,1},\ldots,\beta_r+\alpha_{r,r},\ldots,\beta_R+\alpha_{r,R}).
	\end{align}
\end{theorem}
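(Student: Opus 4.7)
The plan is to prove the recurrence by isolating the $r$-th summation index and exploiting the standard telescoping identity
\[
\frac{1-q^{A_r n_r}}{(q^{A_r};q^{A_r})_{n_r}}=\frac{1}{(q^{A_r};q^{A_r})_{n_r-1}}\qquad(n_r\ge 1),
\]
with the left side vanishing when $n_r=0$. First I would form the difference
\[
H(\beta_1,\ldots,\beta_r,\ldots,\beta_R)-H(\beta_1,\ldots,\beta_r+A_r,\ldots,\beta_R),
\]
which is a single $q$-multi-summation whose summand agrees with that of $H(\beta_1,\ldots,\beta_R)$ except that the factor $q^{\beta_r n_r}$ is replaced by $q^{\beta_r n_r}(1-q^{A_r n_r})$. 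Applying the identity above collapses one factor in the denominator and, crucially, restricts the $n_r$-sum to $n_r\ge 1$.

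Next I would perform the index shift $n_r\mapsto m_r+1$ to convert the sum back to one over $\mathbb{N}^R$. The bookkeeping reduces to tracking how the exponents of $q$ and $x$ pick up additional contributions:
\begin{itemize}
\item the diagonal term $\alpha_{r,r}n_r(n_r-1)/2$ becomes $\alpha_{r,r}m_r(m_r-1)/2+\alpha_{r,r}m_r$;
\item the mixed terms $\sum_{i<j}\alpha_{i,j}n_in_j$, by symmetry of $\uba$, give the $r$-dependent piece $\sum_{s\ne r}\alpha_{r,s}n_rn_s$, which under the shift gains the extra $\sum_{s\ne r}\alpha_{r,s}n_s$;
\item the factor $q^{\beta_r n_r}$ becomes $q^{\beta_r}\cdot q^{\beta_r m_r}$;
\item the factor $x^{\sum_s\gamma_s n_s}$ picks up one extra $x^{\gamma_r}$.
\end{itemize}
Reading off the result, every constant factor outside the sum assembles into $x^{\gamma_r}q^{\beta_r}$, while every new $n_s$-dependent exponent $q^{\alpha_{r,s}n_s}$ merges into the $n_s$-term as the replacement $\beta_s\mapsto\beta_s+\alpha_{r,s}$ (with $s=r$ producing the shift $\beta_r\mapsto\beta_r+\alpha_{r,r}$ on top of the detached $q^{\beta_r}$). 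The resulting series is precisely $x^{\gamma_r}q^{\beta_r}H(\beta_1+\alpha_{r,1},\ldots,\beta_R+\alpha_{r,R})$, which matches the right-hand side of the theorem.

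The manipulation is essentially a bookkeeping exercise, so I do not anticipate a genuine obstacle; the only subtlety is the careful accounting for the symmetric quadratic form, in particular making sure that both the $i<j$ and $i>j$ contributions to the terms involving $n_r$ are collected as $\sum_{s\ne r}\alpha_{r,s}n_rn_s$ using $\alpha_{i,j}=\alpha_{j,i}$. Once that is handled, the shift $n_r\mapsto m_r+1$ produces exactly the shifts of the $\beta_s$ parameters predicted by the theorem. No appeal to the positivity condition \eqref{eq:condition} is needed for the identity itself, since it is a purely formal identity between power series in $\mathbb{Z}[[q]][[x]]$.
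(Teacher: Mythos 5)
Your proposal is correct and follows essentially the same route as the paper's own proof: form the difference to produce the factor $1-q^{A_r n_r}$, collapse it against the $q$-Pochhammer denominator to restrict to $n_r\ge 1$, and then shift the index, with the symmetry $\alpha_{i,j}=\alpha_{j,i}$ accounting for the parameter shifts $\beta_s\mapsto\beta_s+\alpha_{r,s}$. The bookkeeping you describe matches the paper's computation exactly.
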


\begin{proof}
	We have (recall that $\uba$ is a symmetric matrix so that $\alpha_{i,j}=\alpha_{j,i}$ for $1\le i,j\le R$)
	\begin{align*}
	&H(\beta_1,\ldots,\beta_r,\ldots,\beta_R)-H(\beta_1,\ldots,\beta_r+A_r,\ldots,\beta_R)\\
	&\quad=\sum_{n_1,\ldots,n_R\ge 0}\frac{q^{\sum_i \alpha_{i,i}n_i(n_i-1)/2}q^{\sum_{i<j}\alpha_{i,j}n_i n_j}q^{\sum_i \beta_i n_i}(1-q^{n_rA_r})x^{\sum_i \gamma_i n_i}}{(q^{A_1};q^{A_1})_{n_1}\cdots(q^{A_r};q^{A_r})_{n_r}\cdots (q^{A_R};q^{A_R})_{n_R}}\\
	&\quad=\sum_{\substack{n_1,\ldots,n_R\ge 0\\n_r\ge 1}}\frac{q^{\sum_i \alpha_{i,i}n_i(n_i-1)/2}q^{\sum_{i<j}\alpha_{i,j}n_i n_j}q^{\sum_i \beta_i n_i}x^{\sum_i \gamma_i n_i}}{(q^{A_1};q^{A_1})_{n_1}\cdots(q^{A_r};q^{A_r})_{n_r-1}\cdots (q^{A_R};q^{A_R})_{n_R}}\\
	&\quad=x^{\gamma_r}q^{\beta_r}\sum_{n_1,\ldots,n_R\ge 0}\frac{q^{\sum_i \alpha_{i,i}n_i(n_i-1)/2}q^{\sum_{i<j}\alpha_{i,j}n_i n_j}q^{\sum_i (\beta_i+\alpha_{r,i}) n_i}x^{\sum_i \gamma_i n_i}}{(q^{A_1};q^{A_1})_{n_1}\cdots(q^{A_r};q^{A_r})_{n_r}\cdots (q^{A_R};q^{A_R})_{n_R}}\\
	&\quad=x^{\gamma_r}q^{\beta_r}H(\beta_1+\alpha_{r,1},\ldots,\beta_r+\alpha_{r,r},\ldots,\beta_R+\alpha_{r,R}).
	\end{align*}
	The desired identity therefore follows.
\end{proof}

\medskip

Recall that the Factorization Property says that
$$\ubF_{\ubb}(x) = \mU.\mV.\ubF_{\ubb}(xq^S).$$
Further, if $F(x)=H(\beta_1,\ldots,\beta_R)$, then
\begin{equation}\label{eq:shift-S}
F(xq^S)=H(\beta_1+\gamma_1 S,\ldots,\beta_R+\gamma_R S).
\end{equation}

\medskip

Perhaps, if we expect to apply Theorem \ref{th:rec} to deduce Andrews--Gordon type generating function identities, we need to attach some additional conditions to the Factorization Property.

\medskip

\noindent \textbf{Additional Conditions.} For all $1\le s\le R$:
\begin{enumerate}[label={\textup{(\roman*).}}]
	\item $\gamma_s S\in A_s\mathbb{Z}$;
	\item for all $1\le r\le R$, $\alpha_{r,s}\in A_s\mathbb{Z}$.
\end{enumerate}

\subsection{Proof of (\ref{eq:q-diff-sys-ex1})}\label{sec:pf-1}

We first prove \eqref{eq:q-diff-sys-ex1}, which is relatively easy.

\begin{theorem}
	Let
	\begin{gather}
	F_1(x)=F_2(x)=\sum_{n\ge 0}\frac{q^{n^2}x^n}{(q;q)_n}\\
	\intertext{and}
	F_3(x)=\sum_{n\ge 0}\frac{q^{n^2+n}x^n}{(q;q)_n}.
	\end{gather}
	Then,
	\begin{equation}\label{eq:q-diff-sys-ex1-new}
	\begin{pmatrix}
	F_1(x)\\
	F_2(x)\\
	F_3(x)
	\end{pmatrix}=\begin{pmatrix}
	1 & 1 & 1\\
	1 & 1 & 1\\
	1 & 0 & 1
	\end{pmatrix}.\begin{pmatrix}
	1\\
	& xq\\
	& & xq^2
	\end{pmatrix}.\begin{pmatrix}
	F_1(xq^2)\\
	F_2(xq^2)\\
	F_3(xq^2)
	\end{pmatrix}
	\end{equation}
\end{theorem}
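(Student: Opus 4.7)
The plan is to recognize the series $F_1$ and $F_3$ as instances of the $q$-multi-summation $H(\beta)$ from Section \ref{sec:non-com-pf} with a single summation index ($R=1$), and then read off the factorization from Theorem \ref{th:rec}. Concretely, take $R=1$, $\alpha_{1,1}=2$, $\gamma_1=1$, and $A_1=1$, so that
$$H(\beta)=\sum_{n\ge 0}\frac{q^{n(n-1)+\beta n}x^n}{(q;q)_n}.$$
Then $H(1)=F_1(x)=F_2(x)$ and $H(2)=F_3(x)$. Since $\gamma_1 S=1\cdot 2=2$, the shift $x\mapsto xq^S=xq^2$ corresponds to the index shift $\beta\mapsto\beta+2$; that is, $F_1(xq^2)=F_2(xq^2)=H(3)$ and $F_3(xq^2)=H(4)$.

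With these identifications, the matrix identity \eqref{eq:q-diff-sys-ex1-new} unpacks into just two distinct scalar equations (since rows $1$ and $2$ of the matrix system coincide):
\begin{align*}
H(1)&=(1+xq)\,H(3)+xq^2\,H(4),\\
H(2)&=H(3)+xq^2\,H(4).
\end{align*}
Next I would invoke Theorem \ref{th:rec}, which in our single-variable setting reads $H(\beta)=H(\beta+1)+xq^\beta H(\beta+2)$. Applying this with $\beta=2$ gives the second equation immediately. Applying it with $\beta=1$ yields $H(1)=H(2)+xq\,H(3)$; substituting the previously derived expression for $H(2)$ produces $H(1)=H(3)+xq^2H(4)+xq\,H(3)=(1+xq)H(3)+xq^2H(4)$, establishing the first equation.

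Since the recurrence of Theorem \ref{th:rec} is a one-line manipulation of the defining $q$-series (and is already proved earlier in the paper), there is no real obstacle here; the whole argument amounts to a careful bookkeeping of parameters. The only point requiring attention is to confirm that the Additional Conditions (i)--(ii) of \S\ref{sec:rec} hold for this example, which is immediate: $\gamma_1 S=2\in A_1\mathbb{Z}=\mathbb{Z}$ and $\alpha_{1,1}=2\in A_1\mathbb{Z}=\mathbb{Z}$. So the proof will consist of a short paragraph identifying the parameters, a single invocation of Theorem \ref{th:rec} applied twice (at $\beta=1$ and $\beta=2$), and a line of algebra to combine them.
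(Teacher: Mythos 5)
Your proposal is correct and follows essentially the same route as the paper: identify $F_1=F_2=H(1)$, $F_3=H(2)$ with $R=1$, $\alpha_{1,1}=2$, $\gamma_1=A_1=1$, $S=2$, and apply the recurrence $H(\beta)=H(\beta+1)+xq^{\beta}H(\beta+2)$ at $\beta=1$ and $\beta=2$. The only cosmetic difference is that the paper expands $H(1)$ directly in a chain of equalities rather than substituting the $H(2)$ identity afterwards, which is the same computation.
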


\medskip

We have shown in Example \ref{ex:1-1} that in this case $S=2$, $\uba=\begin{pmatrix}
2
\end{pmatrix}$, $\ubg=(1)$, $\ubA=(1)$ and
$$\begin{pmatrix}
F_1(x)\\
F_2(x)\\
F_3(x)
\end{pmatrix}=
\begin{pmatrix}
H(1)\\
H(1)\\
H(2)
\end{pmatrix}.$$
Further, it follows from \eqref{eq:shift-S} that
\begin{gather}
F_1(xq^2)=F_2(xq^2)=H(3)\\
\intertext{and}
F_3(xq^2)=H(4).
\end{gather}

\medskip

To prove \eqref{eq:q-diff-sys-ex1-new}, it suffices to show that
\begin{gather}
F_1(x)=F_1(xq^2)+xqF_2(xq^2)+xq^2F_3(xq^2)\label{eq:ex-1-1}\\
\intertext{and}
F_3(x)=F_1(xq^2)+xq^2F_3(xq^2).\label{eq:ex-1-2}
\end{gather}

It follows from Theorem \ref{th:rec} that
\begin{align*}
F_1(x)&=H(1)\\
&=H(1+1)+xqH(1+2)\\
&=H(2)+xqH(3)\\
&=\big(H(2+1)+xq^2H(2+2)\big)+xqH(3)\\
&=H(3)+xq^2H(4)+xqH(3)\\
&=F_1(xq^2)+xq^2F_3(xq^2)+xqF_2(xq^2).
\end{align*}
Also,
\begin{align*}
F_3(x)&=H(2)\\
&=H(2+1)+xq^2H(2+2)\\
&=H(3)+xq^2H(4)\\
&=F_1(xq^2)+xq^2F_3(xq^2).
\end{align*}
Identities \eqref{eq:ex-1-1} and \eqref{eq:ex-1-2} are therefore proved.

\subsection{Proof of (\ref{eq:q-diff-sys-ex2})}\label{sec:pf-2}

We next prove \eqref{eq:q-diff-sys-ex2}.

\begin{theorem}
	Let
	\begin{gather}
	F_1(x)=F_2(x)=F_3(x)=F_5(x)=\sum_{n_1,n_2\ge 0}\frac{q^{n_1^2+3n_2^2+3n_1n_2}x^{n_1+2n_2}}{(q;q)_{n_1} (q^3;q^3)_{n_2}},\\
	F_4(x)=F_6(x)=\sum_{n_1,n_2\ge 0}\frac{q^{n_1^2+3n_2^2+3n_1n_2+n_1+3n_2}x^{n_1+2n_2}}{(q;q)_{n_1} (q^3;q^3)_{n_2}}
	\intertext{and}
	F_7(x)=\sum_{n_1,n_2\ge 0}\frac{q^{n_1^2+3n_2^2+3n_1n_2+2n_1+3n_2}x^{n_1+2n_2}}{(q;q)_{n_1} (q^3;q^3)_{n_2}}.
	\end{gather}
	Then,
	\begin{equation}\label{eq:q-diff-sys-ex2-new}
	\scalebox{0.865}{%
		$\begin{pmatrix}
		F_1(x)\\
		F_2(x)\\
		F_3(x)\\
		F_4(x)\\
		F_5(x)\\
		F_6(x)\\
		F_7(x)
		\end{pmatrix}
		=\begin{pmatrix}
		1 & 1 & 1 & 1 & 1 & 1 & 1\\
		1 & 1 & 1 & 1 & 1 & 1 & 1\\
		1 & 1 & 1 & 1 & 1 & 1 & 1\\
		1 & 0 & 0 & 0 & 1 & 1 & 1\\
		1 & 1 & 1 & 1 & 1 & 1 & 1\\
		1 & 0 & 0 & 0 & 1 & 1 & 1\\
		1 & 0 & 0 & 0 & 0 & 1 & 1\\
		\end{pmatrix}.\begin{pmatrix}
		1\\
		& xq\\
		&& x^2q^3\\
		&&& x^2q^4\\
		&&&& xq^2\\
		&&&&& xq^3\\
		&&&&&& x^2q^6
		\end{pmatrix}.\begin{pmatrix}
		F_1(xq^3)\\
		F_2(xq^3)\\
		F_3(xq^3)\\
		F_4(xq^3)\\
		F_5(xq^3)\\
		F_6(xq^3)\\
		F_7(xq^3)
		\end{pmatrix}$}.
	\end{equation}
\end{theorem}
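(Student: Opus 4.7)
The plan is to mirror the elementary derivation used for \eqref{eq:q-diff-sys-ex1-new} in \S\ref{sec:pf-1}, now with Theorem~\ref{th:rec} specialised to the parameters of Example~\ref{ex:2-2}: $\uba=\bigl(\begin{smallmatrix}2&3\\3&6\end{smallmatrix}\bigr)$, $\ubg=(1,2)$ and $\ubA=(1,3)$. In this specialisation the two branches of Theorem~\ref{th:rec} read
\[
H(\beta_1,\beta_2) = H(\beta_1+1,\beta_2) + x q^{\beta_1} H(\beta_1+2,\beta_2+3),
\]
\[
H(\beta_1,\beta_2) = H(\beta_1,\beta_2+3) + x^2 q^{\beta_2} H(\beta_1+3,\beta_2+6),
\]
and by \eqref{eq:shift-S} the entry of $\ubF_{\ubb}(xq^3)$ corresponding to $H(\beta_1,\beta_2)$ in $\ubF_{\ubb}(x)$ equals $H(\beta_1+3,\beta_2+6)$. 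Since $F_1=F_2=F_3=F_5=H(1,3)$, $F_4=F_6=H(2,6)$ and $F_7=H(3,6)$, and since rows $1,2,3,5$ of $\mU$ coincide while rows $4,6$ coincide, the seven scalar equations packaged in \eqref{eq:q-diff-sys-ex2-new} collapse to three distinct identities, each expressing one of $H(1,3),\,H(2,6),\,H(3,6)$ as a $\mathbb{Z}[x,q]$-linear combination of the three ``target'' functions $H(4,9),\,H(5,12),\,H(6,12)$.

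Next I would prove the three identities in increasing depth, reusing earlier ones in later proofs. For $H(3,6)$, applying the $\beta_2$-branch followed by the $\beta_1$-branch gives $H(3,6)=H(3,9)+x^2q^6H(6,12)=H(4,9)+xq^3H(5,12)+x^2q^6H(6,12)$, which is precisely the row of $\mU.\mV$ attached to $F_7$. For $H(2,6)$, a single application of the $\beta_1$-branch yields $H(2,6)=H(3,6)+xq^2H(4,9)$, and substituting the $H(3,6)$-expansion just established reproduces the $F_4=F_6$ row. For $H(1,3)$, I would first apply the $\beta_2$-branch once to peel off $x^2q^3H(4,9)$, then decompose the remaining $H(1,6)$ via one $\beta_1$-application into $H(2,6)+xqH(3,9)$, and finally substitute the already derived expansions of $H(2,6)$ and $H(3,9)=H(4,9)+xq^3H(5,12)$; collecting terms produces the coefficients $(1+xq+xq^2+x^2q^3)$, $(xq^3+x^2q^4)$, $x^2q^6$ of $H(4,9),\,H(5,12),\,H(6,12)$, which after redistributing among $F_1(xq^3),\ldots,F_7(xq^3)$ exactly matches the top row of $\mU.\mV$.

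The main obstacle is essentially combinatorial: at each stage there are two ways to apply Theorem~\ref{th:rec}, and one must choose the sequence of branches so that every leaf of the emerging tree lies in $\{H(4,9),\,H(5,12),\,H(6,12)\}$ rather than being a stray function such as $H(5,9)$ or $H(7,12)$, which cannot be rewritten in terms of the three target functions alone. This is precisely the binary-tree picture advertised in the introduction: internal nodes record applications of Theorem~\ref{th:rec}, the two children correspond to the two summands produced, and the monomial weight along each root-to-leaf path contributes to the coefficient of that leaf in the final expansion. Once the right trees for $H(1,3)$, $H(2,6)$, $H(3,6)$ have been drawn, the verification reduces to summing weights over paths and comparing against the rows of $\mU.\mV$, with no computer algebra required.
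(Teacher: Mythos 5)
Your proposal is correct and follows essentially the same route as the paper: reduce the matrix identity to three scalar identities for $H(1,3)$, $H(2,6)$, $H(3,6)$, and derive each by iterated application of Theorem~\ref{th:rec} with the parameters of Example~\ref{ex:2-2}, organized as a binary tree whose leaves are $H(4,9)$, $H(5,12)$, $H(6,12)$. The branch formulas, the shift $F(xq^3)\mapsto H(\beta_1+3,\beta_2+6)$, and the collected coefficients all match the paper's computation.
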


\medskip

We have shown in Example \ref{ex:2-2} that in this case $S=3$, $\uba=\begin{pmatrix}
2 & 3\\3 & 6
\end{pmatrix}$, $\ubg=(1,2)$, $\ubA=(1,3)$ and
$$\begin{pmatrix}
F_1(x)\\
F_2(x)\\
F_3(x)\\
F_4(x)\\
F_5(x)\\
F_6(x)\\
F_7(x)
\end{pmatrix}=
\begin{pmatrix}
H(1,3)\\
H(1,3)\\
H(1,3)\\
H(2,6)\\
H(1,3)\\
H(2,6)\\
H(3,6)
\end{pmatrix}.$$
Again, it follows from \eqref{eq:shift-S} that
\begin{gather}
F_1(xq^3)=F_2(xq^3)=F_3(xq^3)=F_5(xq^3)=H(4,9),\\
F_4(xq^3)=F_6(xq^3)=H(5,12)
\intertext{and}
F_7(xq^3)=H(6,12).
\end{gather}

\medskip

To prove \eqref{eq:q-diff-sys-ex1-new}, it suffices to show that
\begin{gather}
F_1(x)=\left\{\begin{aligned}
&F_1(xq^3)+xqF_2(xq^3)+x^2q^3F_3(xq^3)+x^2q^4F_4(xq^3)\\
&+xq^2F_5(xq^3)+xq^3F_6(xq^3)+x^2q^6F_7(xq^3)
\end{aligned}\right\},\label{eq:ex-2-1}\\
F_4(x)=F_1(xq^3)+xq^2F_5(xq^3)+xq^3F_6(xq^3)+x^2q^6F_7(xq^3)\label{eq:ex-2-2}
\intertext{and}
F_7(x)=F_1(xq^2)+xq^3F_6(xq^3)+x^2q^6F_7(xq^3).\label{eq:ex-2-3}
\end{gather}

We will adopt the following notation to make our argument more transparent. First, a bold term indicates that we will apply Theorem \ref{th:rec} to this term. Also, we will italicize one coordinate if Theorem \ref{th:rec} is applied to that coordinate. Finally, the two underlined terms in the next line are deduced by the previous bold term.

It follows from Theorem \ref{th:rec} that
\begin{align*}
F_1(x)&=\boldsymbol{H(1,\mathit{3})}\\
&=\uwave{\boldsymbol{H(\mathit{1},6)}}+\uwave{x^2q^3H(4,9)}\\
&=\uwave{\boldsymbol{H(\mathit{2},6)}}+\uwave{xq H(3,9)}+x^2q^3H(4,9)\\
&=\uwave{H(3,6)}+\uwave{xq^2H(4,9)}+\boldsymbol{xqH(\mathit{3},9)}+x^2q^3H(4,9)\\
&=\boldsymbol{H(3,\mathit{6})}+xq^2H(4,9)+\uwave{xqH(4,9)}+\uwave{x^2q^4H(5,12)}+x^2q^3H(4,9)\\
&=\uwave{\boldsymbol{H(\mathit{3},9)}}+\uwave{x^2q^6H(6,12)}+xq^2H(4,9)+xqH(4,9)+x^2q^4H(5,12)\\
&\quad+x^2q^3H(4,9)\\
&=\uwave{H(4,9)}+\uwave{xq^3H(5,12)}+x^2q^6H(6,12)+xq^2H(4,9)+xqH(4,9)\\
&\quad+x^2q^4H(5,12)+x^2q^3H(4,9)\\
&=F_1(xq^3)+xq^3F_6(xq^3)+x^2q^6F_7(xq^3)+xq^2F_5(xq^3)+xqF_2(xq^3)\\
&\quad+x^2q^4F_4(xq^3)+x^2q^3F_3(xq^3).
\end{align*}
Also,
\begin{align*}
F_4(x)&=\boldsymbol{H(\mathit{2},6)}\\
&=\uwave{\boldsymbol{H(3,\mathit{6})}}+\uwave{xq^2H(4,9)}\\
&=\uwave{\boldsymbol{H(\mathit{3},9)}}+\uwave{x^2q^6H(6,12)}+xq^2H(4,9)\\
&=\uwave{H(4,9)}+\uwave{xq^3H(5,12)}+x^2q^6H(6,12)+xq^2H(4,9)\\
&=F_1(xq^3)+xq^3F_6(xq^3)+x^2q^6F_7(xq^3)+xq^2F_5(xq^3).
\end{align*}
Finally,
\begin{align*}
F_7(x)&=\boldsymbol{H(3,\mathit{6})}\\
&=\uwave{\boldsymbol{H(\mathit{3},9)}}+\uwave{x^2q^6H(6,12)}\\
&=\uwave{H(4,9)}+\uwave{xq^3H(5,12)}+x^2q^6H(6,12)\\
&=F_1(xq^3)+xq^3F_6(xq^3)+x^2q^6F_7(xq^3).
\end{align*}
Identities \eqref{eq:ex-2-1}, \eqref{eq:ex-2-2} and \eqref{eq:ex-2-3} are therefore proved.

\subsection{Binary trees}\label{sec:b-trees}

Interestingly, the previous two proofs can be represented nicely by binary trees.

More precisely, all nodes are of the form $H(\beta_1,\ldots,\beta_r,\ldots,\beta_R)$. Then Theorem \ref{th:rec} gives two children of $H(\beta_1,\ldots,\beta_r,\ldots,\beta_R)$: the left child is $H(\beta_1,\ldots,\beta_r+A_r,\ldots,\beta_R)$, weighted by $1$, and the right child is $H(\beta_1+\alpha_{r,1},\ldots,\beta_r+\alpha_{r,r},\ldots,\beta_R+\alpha_{r,R})$, weighted by $x^{\gamma_r}q^{\beta_r}$. See Fig.~\ref{fig:node-children}.

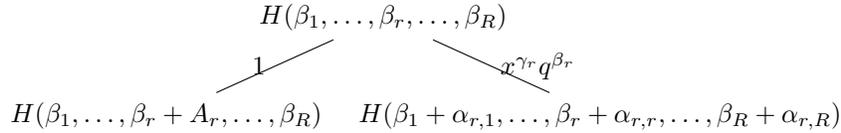
\begin{figure}[h]
	\caption{Node $H(\beta_1,\ldots,\beta_r,\ldots,\beta_R)$ and its children}\label{fig:node-children}
	\medskip
	\begin{forest}
		for tree={l sep=20pt}
		[{$H(\beta_1,\ldots,\beta_r,\ldots,\beta_R)$} 
			[{$H(\beta_1,\ldots,\beta_r+A_r,\ldots,\beta_R)$}, edge label={node[midway,left] {$1$}} ]
			[{$H(\beta_1+\alpha_{r,1},\ldots,\beta_r+\alpha_{r,r},\ldots,\beta_R+\alpha_{r,R})$}, edge label={node[midway,right] {$x^{\gamma_r}q^{\beta_r}$}} ] 
		]
	\end{forest}
\end{figure}

\medskip

Now the proofs of \eqref{eq:q-diff-sys-ex1} and \eqref{eq:q-diff-sys-ex2} can be illustrated by Figs.~\ref{fig:ex1} and \ref{fig:ex2}, respectively.

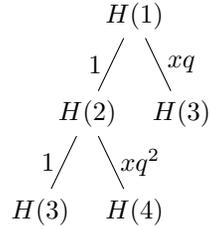
\begin{figure}[ht]
	\caption{The binary tree for \eqref{eq:q-diff-sys-ex1}}\label{fig:ex1}
	\medskip
	\begin{forest}
		for tree={l sep=20pt}
		[$H(1)$ 
			[$H(2)$, edge label={node[midway,left] {$1$}}  
				[$H(3)$, edge label={node[midway,left] {$1$}} ] 
				[$H(4)$, edge label={node[midway,right] {$xq^2$}} ] 
			]
			[$H(3)$, edge label={node[midway,right] {$xq$}} ]
		]
	\end{forest}
\end{figure}

\begin{figure}[ht]
	\caption{The binary tree for \eqref{eq:q-diff-sys-ex2}}\label{fig:ex2}
	\medskip
		\begin{forest}
		for tree={l sep=20pt}
		[{$H(1,\mathit{3})$} 
			[{$H(\mathit{1},6)$}, edge label={node[midway,left] {$1$}}  
				[{$H(\mathit{2},6)$}, edge label={node[midway,left] {$1$}} 
					[{$H(3,\mathit{6})$}, edge label={node[midway,left] {$1$}} 
						[{$H(\mathit{3},9)$}, edge label={node[midway,left] {$1$}} 
							[{$H(4,9)$}, edge label={node[midway,left] {$1$}} ]
							[{$H(5,12)$}, edge label={node[midway,right] {$xq^3$}} ]
						]
						[{$H(6,12)$}, edge label={node[midway,right] {$x^2q^6$}} ]
					]
					[{$H(4,9)$}, edge label={node[midway,right] {$xq^2$}} ]
				] 
				[{$H(\mathit{3},9)$}, edge label={node[midway,right] {$xq$}}
					[{$H(4,9)$}, edge label={node[midway,left] {$1$}} ]
					[{$H(5,12)$}, edge label={node[midway,right] {$xq^3$}} ]
				] 
			]
			[{$H(4,9)$}, edge label={node[midway,right] {$x^2q^3$}} ]
		]
	\end{forest}
\end{figure}
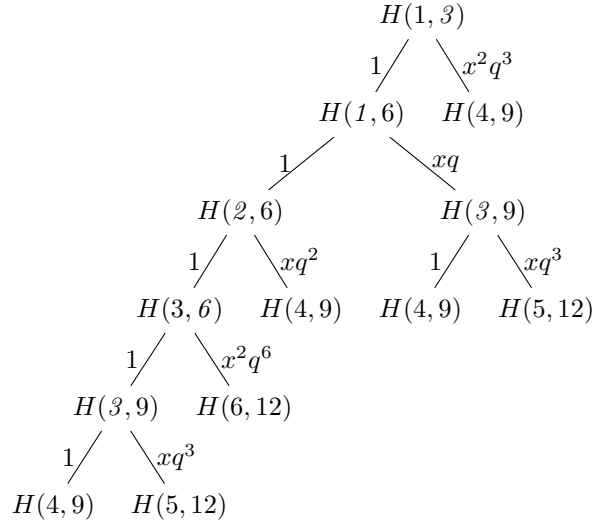

\medskip

In fact, it is relatively easy to deduce other much more complicated identities of the same flavor as \eqref{eq:q-diff-sys-ex1} and \eqref{eq:q-diff-sys-ex2}. For example, the next result follows from the binary tree in Fig.~\ref{fig:ex3}.

\begin{theorem}
	Let
	\begin{align}
	&F_1(x)=\cdots=F_6(x)\notag\\
	&\quad=\sum_{n_1,n_2,n_3\ge 0}\frac{q^{\frac{n_1^2}{2}+3n_2^2+\frac{9n_3^2}{2}+2 n_1 n_2+6n_2 n_3+3n_3n_1+\frac{n_1}{2}-n_2-\frac{n_3}{2}}x^{n_1+2n_2+3n_3}}{(q;q)_{n_1} (q^2;q^2)_{n_2}  (q^3;q^3)_{n_3}},\\
	&F_7(x)=\cdots=F_{13}(x)\notag\\
	&\quad=\sum_{n_1,n_2,n_3\ge 0}\frac{q^{\frac{n_1^2}{2}+3n_2^2+\frac{9n_3^2}{2}+2 n_1 n_2+6n_2 n_3+3n_3n_1+\frac{3n_1}{2}+n_2+\frac{5n_3}{2}}x^{n_1+2n_2+3n_3}}{(q;q)_{n_1} (q^2;q^2)_{n_2}  (q^3;q^3)_{n_3}},\\
	&F_{14}(x)=\cdots=F_{21}(x)\notag\\
	&\quad=\sum_{n_1,n_2,n_3\ge 0}\frac{q^{\frac{n_1^2}{2}+3n_2^2+\frac{9n_3^2}{2}+2 n_1 n_2+6n_2 n_3+3n_3n_1+\frac{3n_1}{2}+3n_2+\frac{11n_3}{2}}x^{n_1+2n_2+3n_3}}{(q;q)_{n_1} (q^2;q^2)_{n_2}  (q^3;q^3)_{n_3}}\\
	\intertext{and}
	&F_{22}(x)=F_{23}(x)\notag\\
	&\quad=\sum_{n_1,n_2,n_3\ge 0}\frac{q^{\frac{n_1^2}{2}+3n_2^2+\frac{9n_3^2}{2}+2 n_1 n_2+6n_2 n_3+3n_3n_1+\frac{5n_1}{2}+3n_2+\frac{11n_3}{2}}x^{n_1+2n_2+3n_3}}{(q;q)_{n_1} (q^2;q^2)_{n_2}  (q^3;q^3)_{n_3}}.
	\end{align}
	\addtocounter{MaxMatrixCols}{25}
	Let
	{
	$$\mA=\scalebox{0.925}{%
	$\begin{pmatrix}
	1&1&1&1&1&1&1&1&1&1&1&1&1&1&1&1&1&1&1&1&1&1&1\\
	1&1&1&1&1&1&1&1&1&1&1&1&1&1&1&1&1&1&1&1&1&1&1\\
	1&1&1&1&1&1&1&1&1&1&1&1&1&1&1&1&1&1&1&1&1&1&1\\
	1&1&1&1&1&1&1&1&1&1&1&1&1&1&1&1&1&1&1&1&1&1&1\\
	1&1&1&1&1&1&1&1&1&1&1&1&1&1&1&1&1&1&1&1&1&1&1\\
	1&1&1&1&1&1&1&1&1&1&1&1&1&1&1&1&1&1&1&1&1&1&1\\
	1&1&0&0&0&0&1&1&1&1&0&0&0&1&1&1&1&0&0&0&0&1&0\\
	1&1&0&0&0&0&1&1&1&1&0&0&0&1&1&1&1&0&0&0&0&1&0\\
	1&1&0&0&0&0&1&1&1&1&0&0&0&1&1&1&1&0&0&0&0&1&0\\
	1&1&0&0&0&0&1&1&1&1&0&0&0&1&1&1&1&0&0&0&0&1&0\\
	1&1&0&0&0&0&1&1&1&1&0&0&0&1&1&1&1&0&0&0&0&1&0\\
	1&1&0&0&0&0&1&1&1&1&0&0&0&1&1&1&1&0&0&0&0&1&0\\
	1&1&0&0&0&0&1&1&1&1&0&0&0&1&1&1&1&0&0&0&0&1&0\\
	1&1&0&0&0&0&1&1&0&0&0&0&0&1&1&1&0&0&0&0&0&1&0\\
	1&1&0&0&0&0&1&1&0&0&0&0&0&1&1&1&0&0&0&0&0&1&0\\
	1&1&0&0&0&0&1&1&0&0&0&0&0&1&1&1&0&0&0&0&0&1&0\\
	1&1&0&0&0&0&1&1&0&0&0&0&0&1&1&1&0&0&0&0&0&1&0\\
	1&1&0&0&0&0&1&1&0&0&0&0&0&1&1&1&0&0&0&0&0&1&0\\
	1&1&0&0&0&0&1&1&0&0&0&0&0&1&1&1&0&0&0&0&0&1&0\\
	1&1&0&0&0&0&1&1&0&0&0&0&0&1&1&1&0&0&0&0&0&1&0\\
	1&1&0&0&0&0&1&1&0&0&0&0&0&1&1&1&0&0&0&0&0&1&0\\
	1&0&0&0&0&0&1&0&0&0&0&0&0&1&1&0&0&0&0&0&0&1&0\\
	1&0&0&0&0&0&1&0&0&0&0&0&0&1&1&0&0&0&0&0&0&1&0
	\end{pmatrix}$
	}$$
	}
	and
	\begin{align*}
	\mW(x)=\diag(&1,xq^2,xq,x^2q^3,x^2q^2,x^3q^4,\\
	&xq^3,x^2q^5,x^2q^4,x^3q^7,x^2q^4,x^3q^6,x^3q^5,\\
	&x^2q^7,x^2q^6,x^3q^9,x^3q^8,x^3q^8,x^3q^7,x^4q^{10},x^4q^9,\\
	&x^3q^{10},x^4q^{11}).
	\end{align*}
	Then,
	\begin{equation}\label{eq:q-diff-sys-ex3-new}
	\scalebox{0.865}{%
		$\begin{pmatrix}
		F_1(x)\\
		F_2(x)\\
		\vdots\\
		F_{23}(x)
		\end{pmatrix}
		=\mA.\mW(x).\begin{pmatrix}
		F_1(xq^3)\\
		F_2(xq^3)\\
		\vdots\\
		F_{23}(xq^3)
		\end{pmatrix}$}.
	\end{equation}
\end{theorem}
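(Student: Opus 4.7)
The plan is to follow exactly the template of the proofs of \eqref{eq:q-diff-sys-ex1-new} and \eqref{eq:q-diff-sys-ex2-new}: identify each $F_k(x)$ as a single member of the family $\fF$ from \S\ref{sec:frac}, and then repeatedly apply Theorem \ref{th:rec} to each left-hand side, producing a binary tree whose leaves assemble exactly to the corresponding row of $\mA.\mW(x).\ubF(xq^3)$.

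Reading off the quadratic parts of the exponents of $F_1,\ldots,F_{23}$, I would first identify the parameters as $R=3$, $\ubA=(1,2,3)$, $\ubg=(1,2,3)$, $S=3$, and
$$\uba=\begin{pmatrix}1 & 2 & 3 \\ 2 & 6 & 6 \\ 3 & 6 & 9\end{pmatrix},$$
and then match the linear parts of the exponents with $\beta_1 n_1+\beta_2 n_2+\beta_3 n_3$ to see that the $23$ functions collapse to only four distinct members of $\fF$:
\begin{equation*}
F_1=\cdots=F_6=H(1,2,4),\qquad F_7=\cdots=F_{13}=H(2,4,7),
\end{equation*}
\begin{equation*}
F_{14}=\cdots=F_{21}=H(2,6,10),\qquad F_{22}=F_{23}=H(3,6,10).
\end{equation*}
By \eqref{eq:shift-S}, since $(\gamma_1,\gamma_2,\gamma_3)S=(3,6,9)$, the four shifted classes are $H(4,8,13)$, $H(5,10,16)$, $H(5,12,19)$ and $H(6,12,19)$, respectively. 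The Additional Conditions of \S\ref{sec:rec} are immediate (each $\gamma_s S$ and each $\alpha_{r,s}$ lies in $A_s\mathbb{Z}$), so Theorem \ref{th:rec} is applicable at every node of every tree we will construct.

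The rows of $\mA$ also come in exactly four blocks (rows $1$--$6$, $7$--$13$, $14$--$21$, $22$--$23$), so the $23\times 23$ identity \eqref{eq:q-diff-sys-ex3-new} reduces to exactly four scalar identities, one per pair of left-hand $H$ and matching row. For each such identity I would apply Theorem \ref{th:rec} iteratively, choosing at each node a coordinate $r\in\{1,2,3\}$ in which to recurse, and continuing along each branch until the current $H$-value matches one of the four shifted targets. Summing the accumulated monomial weights of leaves that share the same target $H$-value and comparing with $\sum_j \mA_{k,j}\mW_{j,j}(x)$ over the appropriate column block then closes each scalar identity.

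The main obstacle is navigational rather than analytic: at each internal node of each of the four trees one must select the ``correct'' coordinate to expand on so that no branch overshoots a target and so that the total weight collected at each target leaf agrees with the prescribed column weight. A clean tactic, generalizing the strategy used for \eqref{eq:q-diff-sys-ex2-new}, is to expand first on $r=3$ (peeling off the $H(6,12,19)$ leaves), then on $r=2$ (peeling off $H(5,12,19)$), and finally on $r=1$ (separating $H(5,10,16)$ from $H(4,8,13)$). Once the four trees are drawn, verifying the four identities is routine monomial bookkeeping, in direct analogy with Fig.~\ref{fig:ex3}.
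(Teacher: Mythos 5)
Your proposal is correct and follows essentially the same route as the paper: the same identification $R=3$, $\ubA=(1,2,3)$, $\ubg=(1,2,3)$, $S=3$, the same $\uba$, the same collapse of the $23$ functions to $H(1,2,4)$, $H(2,4,7)$, $H(2,6,10)$, $H(3,6,10)$ with shifted targets $H(4,8,13)$, $H(5,10,16)$, $H(5,12,19)$, $H(6,12,19)$, and the same iterated use of Theorem \ref{th:rec} organized as a binary tree whose leaf weights reproduce the row sums $\sum_j \mA_{k,j}\mW_{j,j}$. The paper simply exhibits the explicit tree (Fig.~\ref{fig:ex3}) in place of your verbal description of how to navigate it, so the only remaining work in your version is actually drawing the tree and checking the leaf bookkeeping, which you correctly identify as routine.
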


\medskip

\begin{proof}
	Let $\uba=\begin{pmatrix}
	1 & 2 & 3\\
	2 & 6 & 6\\
	3 & 6 & 9
	\end{pmatrix}$, $\ubg=(1,2,3)$, $\ubA=(1,2,3)$ and $S=3$. We have
	\begin{gather*}
	F_1(x)=\cdots=F_6(x)=H(1,2,4)\quad\xrightarrow{x\mapsto xq^3}\quad H(4,8,13),\\
	F_7(x)=\cdots=F_{13}(x)=H(2,4,7)\quad\xrightarrow{x\mapsto xq^3}\quad H(5,10,16),\\
	F_{14}(x)=\cdots=F_{21}(x)=H(2,6,10)\quad\xrightarrow{x\mapsto xq^3}\quad H(5,12,19)\\
	\intertext{and}
	F_{22}(x)=F_{23}(x)=H(3,6,10)\quad\xrightarrow{x\mapsto xq^3}\quad H(6,12,19).
	\end{gather*}
	The rest follows from the binary tree in Fig.~\ref{fig:ex3}.
\end{proof}

\afterpage{%
	\begin{landscape}
		\noindent Recall that $\uba=\begin{pmatrix}
		1 & 2 & 3\\
		2 & 6 & 6\\
		3 & 6 & 9
		\end{pmatrix}$, $\ubg=(1,2,3)$, $\ubA=(1,2,3)$ and $S=3$.
		{\tiny
			\begin{figure}[b]
				\caption{The binary tree for \eqref{eq:q-diff-sys-ex3-new}}\label{fig:ex3}
				\medskip
				\begin{forest}
					for tree={l sep=15pt}
					[{$H(1,2,\textit{4})$}
						[{$H(1,\mathit{2},7)$}, edge label={node[midway,left] {$1$}}
							[{$H(\mathit{1},4,7)$}, edge label={node[midway,left] {$1$}}
								[{$H(2,4,\mathit{7})$}, edge label={node[midway,left] {$1$}}
									[{$H(2,\mathit{4},10)$}, edge label={node[midway,left] {$1$}}
										[{$H(\mathit{2},6,10)$}, edge label={node[midway,left] {$1$}}
											[{$H(3,6,\mathit{10})$}, edge label={node[midway,left] {$1$}}
												[{$H(3,\mathit{6},13)$}, edge label={node[midway,left] {$1$}}
													[{$H(\mathit{3},8,13)$}, edge label={node[midway,left] {$1$}}
														[{$H(4,8,13)$}, edge label={node[midway,left] {$1$}}]
														[{$H(\mathit{4},10,16)$}, edge label={node[midway,right] {$xq^{3}$}}
															[{$H(5,10,16)$}, edge label={node[midway,left] {$1$}}]
															[{$H(5,12,19)$}, edge label={node[midway,right] {$xq^{4}$}}]
														]
													]
													[{$H(5,12,19)$}, edge label={node[midway,right] {$x^2q^{6}$}}]
												]
												[{$H(6,12,19)$}, edge label={node[midway,right] {$x^3q^{10}$}}]
											]
											[{$H(\mathit{3},8,13)$}, edge label={node[midway,right] {$xq^2$}}
												[{$H(4,8,13)$}, edge label={node[midway,left] {$1$}}]
												[{$H(\mathit{4},10,16)$}, edge label={node[midway,right] {$xq^{3}$}}
													[{$H(5,10,16)$}, edge label={node[midway,left] {$1$}}]
													[{$H(5,12,19)$}, edge label={node[midway,right] {$xq^{4}$}}]
												]
											]
										]
										[{$H(\mathit{4},10,16)$}, edge label={node[midway,right] {$x^2q^4$}}
											[{$H(5,10,16)$}, edge label={node[midway,left] {$1$}}]
											[{$H(5,12,19)$}, edge label={node[midway,right] {$xq^{4}$}}]
										]
									]
									[{$H(5,10,16)$}, edge label={node[midway,right] {$x^3q^7$}}]
								]
								[{$H(\mathit{2},6,10)$}, edge label={node[midway,left] {$xq$}}
									[{$H(3,6,\mathit{10})$}, edge label={node[midway,left] {$1$}}
										[{$H(3,\mathit{6},13)$}, edge label={node[midway,left] {$1$}}
											[{$H(\mathit{3},8,13)$}, edge label={node[midway,left] {$1$}}
												[{$H(4,8,13)$}, edge label={node[midway,left] {$1$}}]
												[{$H(\mathit{4},10,16)$}, edge label={node[midway,right] {$xq^{3}$}}
													[{$H(5,10,16)$}, edge label={node[midway,left] {$1$}}]
													[{$H(5,12,19)$}, edge label={node[midway,right] {$xq^{4}$}}]
												]
											]
											[{$H(5,12,19)$}, edge label={node[midway,right] {$x^2q^{6}$}}]
										]
										[{$H(6,12,19)$}, edge label={node[midway,right] {$x^3q^{10}$}}]
									]
									[{$H(\mathit{3},8,13)$}, edge label={node[midway,right] {$xq^2$}}
										[{$H(4,8,13)$}, edge label={node[midway,left] {$1$}}]
										[{$H(\mathit{4},10,16)$}, edge label={node[midway,right] {$xq^{3}$}}
											[{$H(5,10,16)$}, edge label={node[midway,left] {$1$}}]
											[{$H(5,12,19)$}, edge label={node[midway,right] {$xq^{4}$}}]
										]
									]
								]
							]
							[{$H(\mathit{3},8,13)$}, edge label={node[midway,right] {$x^2q^2$}}
								[{$H(4,8,13)$}, edge label={node[midway,left] {$1$}}]
								[{$H(\mathit{4},10,16)$}, edge label={node[midway,right] {$xq^{3}$}}
									[{$H(5,10,16)$}, edge label={node[midway,left] {$1$}}]
									[{$H(5,12,19)$}, edge label={node[midway,right] {$xq^{4}$}}]
								]
							]
						]
						[{$H(4,8,13)$}, edge label={node[midway,right] {$x^3q^4$}}]
					]
				\end{forest}
			\end{figure}
		}
	\end{landscape}
}

\section{Closing remarks}\label{sec:rmks}

Our main concern is about the Factorization Property. Recall that $\mU$ is a $(0,1)$-matrix such that all entries in the first row and column are $1$, and $\mV$ is a diagonal matrix such that all (diagonal) entries are monic monomials in $x$ and $q$ with $\mV_{1,1}=1$. The Factorization Property says that
\begin{equation}\label{eq:f-p}
\ubF_{\ubb}(x) = \mU.\mV.\ubF_{\ubb}(xq^S),
\end{equation}
where $S$ is a positive integer and
$$\ubF_{\ubb}(x)=\begin{pmatrix}
F_1(x)\\
F_2(x)\\
\vdots\\
F_K(x)
\end{pmatrix}=
\begin{pmatrix}
H(\ubb_1)\\
H(\ubb_2)\\
\vdots\\
H(\ubb_K)
\end{pmatrix},$$
in which $H(\ubb)=H(\beta_1,\ldots,\beta_R)$ is of the form
\begin{align*}
H(\ubb)=\sum_{n_1,\ldots,n_R\ge 0}\frac{q^{\sum_{r=1}^R \alpha_{r,r}n_r(n_r-1)/2}q^{\sum_{1\le i< j\le R}\alpha_{i,j}n_i n_j}q^{\sum_{r=1}^R \beta_r n_r}x^{\sum_{r=1}^R \gamma_r n_r}}{(q^{A_1};q^{A_1})_{n_1}\cdots (q^{A_R};q^{A_R})_{n_R}}.
\end{align*}
Probably we also require the Additional Conditions: for all $1\le s\le R$:
\begin{enumerate}[label={\textup{(\roman*).}}]
	\item $\gamma_s S\in A_s\mathbb{Z}$;
	\item for all $1\le r\le R$, $\alpha_{r,s}\in A_s\mathbb{Z}$.
\end{enumerate}

\medskip

\begin{problem}
	For given $\mU$ and $\mV$, is it possible to determine if there exist $\ubF_{\ubb}(x)$ and $S$ such that \eqref{eq:f-p} is true?
\end{problem}

We have another problem from a different direction.

\begin{problem}
	Are there any criteria of $\ubF_{\ubb}(x)$ that we are always able to find $\mU$, $\mV$ and $S$ such that \eqref{eq:f-p} is true?
\end{problem}

The last problem is perhaps simpler.

\begin{problem}
	Can we construct a family of $\mU$, $\mV$, $\ubF_{\ubb}(x)$ and $S$ such that \eqref{eq:f-p} holds?
\end{problem}

If we are able to find such construction, then we may derive a family of span one linked partition ideals (or at least a family of modified directed graphs) with nice analytic generation functions.

\bibliographystyle{amsplain}

\begin{thebibliography}{99}
	
	\bibitem{And1966}
	G. E. Andrews, An analytic proof of the Rogers--Ramanujan--Gordon identities, \textit{Amer. J. Math.} \textbf{88} (1966), 844--846.
	
	\bibitem{And1972}
	G. E. Andrews, Partition identities, \textit{Advances in Math.} \textbf{9} (1972), 10--51.
	
	\bibitem{And1974}
	G. E. Andrews, A general theory of identities of the Rogers-Ramanujan type, \textit{Bull. Amer. Math. Soc.} \textbf{80} (1974), 1033--1052.
	
	\bibitem{And1975}
	G. E. Andrews, Problems and prospects for basic hypergeometric functions, \textit{Theory and application of special functions (Proc. Advanced Sem., Math. Res. Center, Univ. Wisconsin, Madison, Wis., 1975)}, pp. 191--224. Math. Res. Center, Univ. Wisconsin, Publ. No. 35, Academic Press, New York, 1975.
	
	\bibitem{And1976}
	G. E. Andrews, \textit{The theory of partitions}, Reprint of the 1976 original. Cambridge Mathematical Library. Cambridge University Press, Cambridge, 1998. xvi+255 pp.
	
	\bibitem{BJM2018}
	K. Bringmann, C. Jennings-Shaffer, and K. Mahlburg, Proofs and reductions of various conjectured partition identities of Kanade and Russell, to appear in \textit{J. Reine Angew. Math.} Available at arXiv:1809.06089.
	
	\bibitem{Cap1993}
	S. Capparelli, On some representations of twisted affine Lie algebras and combinatorial identities, \textit{J. Algebra} \textbf{154} (1993), no. 2, 335--355.
	
	\bibitem{CL2018}
	S. Chern and Z. Li, Kanade--Russell conjectures and linked partition ideals, submitted. Available at arXiv:1809.08655.
	
	\bibitem{Gol1967}
	H. G\"{o}llnitz, Partitionen mit differenzenbedingungen, \textit{J. Reine Angew. Math.} \textbf{225} (1967), 154--190.

	\bibitem{Gor1961}
	B. Gordon, A combinatorial generalization of the Rogers--Ramanujan identities, \textit{Amer. J. Math.} \textbf{83} (1961), 393--399.
	
	\bibitem{Gor1966}
	B. Gordon, Some continued fractions of the Rogers--Ramanujan type, \textit{Duke Math. J.} \textbf{32} (1965), 741--748.
	
	\bibitem{KR2015}
	S. Kanade and M. C. Russell, \texttt{IdentityFinder} and some new identities of Rogers-Ramanujan type, \textit{Exp. Math.} \textbf{24} (2015), no. 4, 419--423.
	
	\bibitem{KR2018}
	S. Kanade and M. C. Russell, Staircases to analytic sum-sides for many new integer partition identities of Rogers--Ramanujan type, \textit{Electron. J. Combin.} \textbf{26} (2019), no. 1, Paper 1.6, 33 pp.
	
	\bibitem{KK2009}
	M. Kauers and C. Koutschan, A Mathematica package for $q$-holonomic sequences and power series, \textit{Ramanujan J.} \textbf{19} (2009), no. 2, 137--150.
	
	\bibitem{Kur2018}
	K. Kur\c{s}ung\"{o}z, Andrews--Gordon type series for Kanade-Russell conjectures, \textit{Preprint} (2018). Available at arXiv:1808.01432.
	
	\bibitem{Kur2019}
	K. Kur\c{s}ung\"{o}z, Andrews--Gordon type series for Capparelli's and G\"{o}llnitz-Gordon identities, \textit{J. Combin. Theory Ser. A} \textbf{165} (2019), 117--138.

	\bibitem{Ram1919}
	S. Ramanujan, Proof of certain identities in combinatory analysis, \textit{Proc. Cambridge Philos. Soc.} \textbf{19} (1919), 214--216.
	
	\bibitem{Rie2003}
	A. Riese, \texttt{qMultiSum}---a package for proving $q$-hypergeometric multiple summation identities, \textit{J. Symbolic Comput.} \textbf{35} (2003), no. 3, 349--376.

	\bibitem{Rog1894}
	L. J. Rogers, Third memoir on the expansion of certain infinite products, \textit{Proc. Lond. Math. Soc.} \textbf{26} (1894), 15--32.

\end{thebibliography}

\end{document}